\documentclass{article}

\usepackage[letterpaper,top=2cm,bottom=2cm,left=2.9cm,right=2.9cm,marginparwidth=1.75cm]{geometry}

\usepackage[colorlinks=true, allcolors=blue]{hyperref}
\usepackage{amssymb}
\usepackage[utf8]{inputenc}
\usepackage{amsthm}
\usepackage{amsmath}
\usepackage{amsfonts}
\usepackage{enumerate}
\usepackage[numeric]{amsrefs}
\usepackage{array}
\usepackage{tabularx}
\usepackage{graphicx}
\usepackage{xcolor}
\usepackage{soul}
\usepackage{booktabs}
\usepackage[scaled=0.95]{helvet}
\usepackage{float}

\definecolor{softyellow}{RGB}{255,160,100}
\definecolor{darkergreen}{rgb}{0.0, 0.6, 0.0}

\newcolumntype{L}[1]{>{\raggedright\let\newline\\\arraybackslash\hspace{0pt}}m{#1}}
\newcolumntype{C}[1]{>{\centering\let\newline\\\arraybackslash\hspace{0pt}}m{#1}}
\newcolumntype{R}[1]{>{\raggedleft\let\newline\\\arraybackslash\hspace{0pt}}m{#1}}

\usepackage{mathrsfs}
\usepackage{url}
\usepackage[all]{xy}
\usepackage[scr]{rsfso}

\newtheorem{thm}{Theorem}

\newtheorem{lemma}[thm]{Lemma}

\newtheorem*{thm*}{Theorem}
\newtheorem*{conj*}{Conjecture}
\newtheorem*{qn*}{Question}

\theoremstyle{definition}

\newtheorem*{recentprogress}{Recent Progress}
\newtheorem{defi}{Definition}
\newtheorem{claim}{Claim}

\newcolumntype{D}{ >{\centering\arraybackslash} m{0.7cm} }
\newcolumntype{E}{ >{\centering\arraybackslash} m{2.5cm} }
\newcolumntype{F}{ >{\centering\arraybackslash} m{0.7cm} }
\newcolumntype{G}{ >{\centering\arraybackslash} m{5cm} }

\renewenvironment{abstract}
  {\small
   \begin{center}
   \begin{minipage}{0.73\textwidth}
   \noindent\textsc{Abstract.}\ }
  {\end{minipage}
   \end{center}}

\newcommand{\HTwoBound}{31\,140}
\newcommand{\HThreeBound}{2\,560\,022}
\newcommand{\HFourBound}{193\,672\,056}
\newcommand{\HFiveBound}{13\,659\,045\,840}

\newcommand{\MTwoBound}{3\,438}
\newcommand{\MThreeBound}{197\,454}
\newcommand{\MFourBound}{10\,995\,550}
\newcommand{\MFiveBound}{607\,150\,000}

\title{\large\bfseries\MakeUppercase{Clusters of primes revisited}}
\author{\fontsize{10}{12}\selectfont\scshape Paweł Lewulis}
\date{}

\begin{document}
\maketitle

\begin{abstract}
    We revisit the generalized Rayleigh quotient optimization problem related to the Maynard-Tao sieve with the goal of finding a more efficient function basis. We demonstrate that a family based on products of reciprocals, previously used in asymptotic estimates for $H_m = \liminf_{n \rightarrow \infty} (p_{n+m} - p_n)$, provides results that are close to the theoretical limits of the sieve across many regimes. In particular, we obtain:
    \begin{equation*}
        H_2 \leq \HTwoBound, \quad 
        H_3 \leq \HThreeBound, \quad 
        % 2563692
        H_4 \leq \HFourBound, \quad
        H_5 \leq \HFiveBound.
    \end{equation*}
    We also show that, as $k \to \infty$, the related variational quantity $M_k$, introduced by Maynard \cite{Maynard}, satisfies the lower bound
    \begin{equation*}
        M_k \geq \log k - 0.273572 + o(1).
    \end{equation*}
\end{abstract}

\section{Introduction}

In this paper, we study upper bounds for $H_m := \liminf_{n \to \infty}(p_{n+m} - p_n)$. The first unconditional finite bound of this kind was established by Zhang \cite{Zhang}, who proved that $H_1 \leq 70\,000\,000$. Soon afterward, Maynard \cite{Maynard} developed a different approach and obtained $H_1 \leq 600$ and $H_m \ll m^3 \exp(4m)$. Both of these breakthroughs led to successive phases of the Polymath8 project, which aimed to push the newly developed techniques to their limits \cite{PM8a, PM8b}. The bounds $H_1 \leq 246$, $H_2 \leq 398\,130$, $H_3 \leq 24\,797\,814$ (and similar for larger $m$) were established and remained untouched for many years.

Eventually, Julia Stadlmann obtained slightly stronger bounds for $H_m$ for $m = 2, 3, 4, 5$ as a corollary of her result concerning primes in arithmetic progressions \cite{JuliaS_Zhang_paper}:
\begin{equation*}
    H_2 \leq 396 \, 504, \quad H_3 \leq 24 \, 407 \,016, 
    \quad H_4 \leq 1 \, 391 \, 051 \, 532, 
    \quad H_5 \leq 77 \, 510 \, 685 \, 234.
\end{equation*}
Recently, she also established $H_1 \leq 240$ by finding a way to combine ideas from  Maynard's and Zhang's approaches \cite{JuliaS}. A concurrent preprint by Hanxin Zhang \cite{HanxinZhang2026} reported the same bound. Several further preprints claim improvements to the bound for $H_1$ using various strategies \cite{Kintali2026, SongYue2026, Charton2026, OAI}.

\begin{recentprogress} 
    This paper was developed independently of the recent preprint by Roeling \cite{Roeling}, of which the author became aware only when this project was already at an advanced stage. The methods used are different, and the results obtained here are slightly stronger.

    Very recently, another preprint \cite{Kapila} reports further improvements to the numerical bounds on $H_m$ for $m=2,3,4,5$ by incorporating enlarged sieve support and the methods developed in \cite{OAI}. The resulting bounds are stronger than those presented here, in particular $H_2 \leq 20 \, 370$.

    Concerning the asymptotic results, Theorem \ref{asymptotyczny_teoremat} remains unmatched in the current literature, as far as the author is aware.
\end{recentprogress}

The starting point in all modern approaches to the problem of short gaps between primes is the GPY setup \cite{GPY} with some variant of the Selberg sieve, usually the Maynard–Tao sieve. We begin with an expression of the form
\begin{equation}\label{GPY_setup}
    \mathcal{S} (x; k, m, \mathcal{H}, \Lambda^2_{\mathrm{Sel}}) = \sum_{x<n \leq 2x} \left( \sum_{h \in \mathcal{H}} \mathbf{1}_{\mathbf{P}} 
    (n + h)   - m \right) \, \Lambda^2_{\mathrm{Sel}} (n; \mathcal{H}),
\end{equation}
where $\mathcal{H}$ is an admissible $k$-tuple and $\Lambda^2_{\mathrm{Sel}}$ are appropriately chosen non-negative weights that are designed to detect good candidates for primes. If we can demonstrate that there exist arbitrarily large
values of $x$ such that
\begin{equation*}
    \mathcal{S} > 0,
\end{equation*}
then an upper bound for $H_m$ follows. Of course, ideally $\Lambda^2_{\mathrm{Sel}}$ would detect primes exclusively, but even for $k=2$ this would require arithmetic information well beyond our reach (the parity barrier). In practice, the best we can hope for with currently known sieve techniques is to retain a sufficiently high density of primes among the remaining candidates. This can lead to non-trivial bounds on $H_m$.

% In \cite{PM8b} studying the setup of the form (\ref{GPY_setup}) led to the following definitions and 

\subsection*{From primes to functional optimization}

We now introduce the necessary terminology and briefly describe how the Maynard-Tao sieve transforms prime detection into an optimization problem. Starting from an expression of the form (\ref{GPY_setup}), we expand the Selberg weights, swap the sums, and obtain something of the form:
\begin{equation}\label{general_problem_form}
    \mathcal{S} = 
    \mathrm{MT_{\mathbf{1}_\mathbf{P}}} - m \cdot \mathrm{MT_{1}} 
    + (\text{BV-type error}),
\end{equation}
where the main terms give rise to certain integral functionals to be optimized later. The error term is closely related to the distributional claims concerning primes in arithmetic progressions of the Bombieri-Vinogradov type. Better results in this area lead to larger sieve supports. 

Recall some key notation and concepts. We closely follow the notation introduced by Polymath8b.

\begin{defi}[Standard simplex]
    For any $\theta > 0$ and any integer $k \geq 1$, we define the rescaled standard simplex as the set
    \begin{equation*}
        \theta \cdot \mathcal{R}_k :=
        \left\{(t_1, \ldots ,t_k) \in [0,+\infty)^k \colon
        t_1 + \cdots + t_k \le \theta \right\}.
    \end{equation*}
    If $\theta=1$, then we simply write $\mathcal{R}_k$.
\end{defi}

\begin{defi}[Key functionals]\label{key_functionals}
    Let $k \ge 2$ be an integer. Let
    $0 \leq \varepsilon < 1$ be a real parameter. Let also $F \colon [0,+\infty)^k \to \mathbf{R}$ be a locally square-integrable, symmetric function. We define the functionals:
    \begin{align*}
        I_{\varepsilon}(F) &:= 
        \int \limits_{(1 + \varepsilon) \cdot \mathcal{R}_{k}} F(t_1,\dots,t_k)^2 \,dt_1\cdots dt_k, \\
        J_{\varepsilon}(F) &:=
        \int \limits_{(1 - \varepsilon) \cdot \mathcal{R}_{k-1}}
        \left(
            \int \limits_0^{\infty} F(t_1,\dots,t_k)
            \, \mathbf{1}_{t_1 + \dots + t_k \leq 1 + \varepsilon} 
            \, dt_k
        \right)^2
        dt_1\cdots dt_{k-1}. 
    \end{align*}
    In particular, we write $I(F) := I_{0} (F)$ and $J(F) := J_{0} (F)$.
\end{defi}

\begin{defi}[Variational quotient]\label{variational_quotient}
    Fix an integer $k \geq 2$ and a real $0 \leq \varepsilon < 1$. The key value that we optimize is
    \begin{equation*}
        M_{k,\varepsilon} := \sup_{F}
        \frac{k  J_{\varepsilon}(F)}{I_{\varepsilon}(F)} 
    \end{equation*}
    where the supremum is taken over all functions $F$ satisfying $I_{\varepsilon}(F) > 0$ that are locally square-integrable and symmetric. If $\varepsilon=0$, then we simply refer to this value as $M_k$.
\end{defi}

\begin{defi}[Admissible tuples]
    For any integer $k \geq 1$, an admissible $k$-tuple
    is a sequence 
    \begin{equation*}
        \mathcal{H} = (h_1, \ldots ,h_k)
    \end{equation*} 
    of $k$ integers $h_1 < \cdots < h_k$ which avoids at least one residue class $a_p \pmod p$ for every prime $p$.
\end{defi}

\begin{claim}[Weak Dickson--Hardy--Littlewood conjecture]
    Let $1 \leq j \leq k$ be integers. For any admissible $k$-tuple $\mathcal{H} = (h_1, \dots, h_k)$ there exist infinitely many translates $n + \mathcal{H} := (n + h_1, \dots, n + h_k)$ of $\mathcal{H}$ which contain at least $j$ primes. We refer to this claim as $DHL[k,j]$.
\end{claim}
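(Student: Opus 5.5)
The statement above is really a \emph{named hypothesis} rather than a theorem: for $j=k$ it is the full Dickson--Hardy--Littlewood prime $k$-tuples conjecture. Already for $k=j=2$ and $\mathcal{H}=(0,2)$ it is the twin prime conjecture. So I do not expect to prove it as worded for every pair $1 \le j \le k$. The plan is to prove every case within reach of the Maynard--Tao machinery outlined in the introduction, and to pin down exactly where the argument stops.

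\textbf{Step 1: the case $j=1$.} This case is unconditional and elementary. Given an admissible $\mathcal{H}=(h_1,\dots,h_k)$, set $n=p-h_1$ for each prime $p>h_1$. The translate $n+\mathcal{H}$ then contains the prime $p$. Since there are infinitely many primes, there are infinitely many such $n$, so $DHL[k,1]$ holds for every $k$.

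\textbf{Step 2: the regime $k$ large relative to $j$.} Here I would use the sum $\mathcal{S}$ from (\ref{GPY_setup}) with $m=j-1$ and Maynard--Tao weights built from a smooth symmetric $F$ supported on $\mathcal{R}_k$ (or on $(1+\varepsilon)\cdot\mathcal{R}_k$). Expanding as in (\ref{general_problem_form}) and using Bombieri--Vinogradov with level $\theta=1/2$ should give
\begin{equation*}
  \mathcal{S} \approx \Bigl(\tfrac{\theta}{2}\, k J(F) - (j-1)\, I(F)\Bigr)\cdot(\text{positive normalisation}),
\end{equation*}
so that $DHL[k,j]$ follows as soon as $\tfrac{\theta}{2} M_k > j-1$. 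To verify this inequality, I would take Maynard's explicit lower bound $M_k \ge \log k - 2\log\log k - 2$, or better the asymptotic $M_k \ge \log k - 0.273572 + o(1)$ announced in the abstract. Either bound gives $DHL[k,j]$ whenever $\log k \gtrsim 4(j-1)+O(1)$, that is, for $k \ge C e^{4j}$. For small $j$, the numerical values of $M_{k,\varepsilon}$ computed later in the paper would give explicit thresholds on $k$.

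\textbf{Step 3: the remaining cases, which are the main obstacle.} These are the pairs $(k,j)$ with $j \ge 2$ and $k$ below the threshold from Step 2, in particular $j=k$. The sieve cannot reach them. Selberg-type weights only ever guarantee a proportion of primes among the surviving $n+h_i$ that is bounded by $\tfrac{\theta}{2}M_k/k$. By the parity barrier, no choice of weights can force every coordinate to be prime, even with $\theta=1$ (Elliott--Halberstam).

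What I would try first is to push the level of distribution as far as possible, for example with Zhang--Stadlmann type estimates and enlarged supports $M_{k,\varepsilon}$, which narrows the gap between the achievable $j$ and $k$. However, I do not see a route to the diagonal cases. I would therefore present the claim as the working hypothesis $DHL[k,j]$, proved here only in the regimes of Steps 1 and 2, and then use it in the form ``$DHL[k,m+1]$ implies $H_m \le$ (diameter of the narrowest admissible $k$-tuple)''.
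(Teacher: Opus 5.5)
Your reading is correct: the paper gives no proof of this claim. It is only the label $DHL[k,j]$ for a Dickson--Hardy--Littlewood-type statement (open for $j=k$), and the paper establishes it only in specific instances (Theorem~\ref{DHL_claims}) by combining Theorem~\ref{PM_theorem}, i.e.\ $M_{k,\varepsilon}>4m \Rightarrow DHL[k,m+1]$, with certified lower bounds on $M_{k,\varepsilon}$ --- exactly the mechanism of your Step~2. One small correction: Maynard's bound $M_k \ge \log k - 2\log\log k - 2$ only gives the threshold $k \gg j^2 e^{4j}$, and it is the $\log k - O(1)$ bounds (Polymath8b, or Theorem~\ref{asymptotyczny_teoremat} here) that give $k \ge C e^{4j}$.
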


Our main starting point is \cite[Theorems 22 and 26]{PM8b}. Here we recall a combined and slightly specialized version of these two: 

\begin{thm}[Sieving on an epsilon-enlarged simplex]\label{PM_theorem}
    Let $k \ge 2$ and $m \ge 1$ be fixed integers, and let
    $0 \leq \varepsilon < 1$ be a fixed real. If
    \begin{equation*}
        M_{k,\varepsilon}> 4m,
    \end{equation*}
    then $\operatorname{DHL}[k,m+1]$ holds.
\end{thm}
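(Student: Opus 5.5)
This theorem is a recollection of \cite[Theorems 22 and 26]{PM8b}, so the plan is to reconstruct the Polymath8b argument inside the GPY framework (\ref{GPY_setup}), using the Bombieri--Vinogradov theorem as the distributional input. The first step is the sieve weight. Fix an admissible $k$-tuple $\mathcal{H}$, set $W=\prod_{p\le D_0}p$ with $D_0$ slowly growing, and restrict $n$ to a residue class $n\equiv b \pmod W$ for which every $n+h_i$ is coprime to $W$. Take a smooth symmetric $F$ supported on $(1+\varepsilon)\cdot\mathcal{R}_k$, and put
\begin{equation*}
    \Lambda_{\mathrm{Sel}}(n;\mathcal{H})=\sum_{d_i\mid n+h_i}\Big(\prod_i\mu(d_i)\Big)F\Big(\tfrac{\log d_1}{\log R},\dots,\tfrac{\log d_k}{\log R}\Big),
\end{equation*}
with $R=x^{\theta/2}$ and $\theta<1/2$ the level of distribution. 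Smooth functions are dense in the relevant $L^2$ space, so it costs nothing to prove the result for smooth $F$ and recover the supremum $M_{k,\varepsilon}$ by approximation.

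The second step is the asymptotic for the two main terms in (\ref{general_problem_form}). Expanding the square and applying the standard Fourier or contour-integral evaluation (as in \cite{Maynard} and \cite{PM8b}), one should obtain $\mathrm{MT}_1\sim c\,I_\varepsilon(F)$. This needs only that the product of two divisors, at most $R^{2(1+\varepsilon)}$, stays within the range where the counts of $n$ in residue classes are trivial. Since $\varepsilon<1$, I will keep $2(1+\varepsilon)\theta/2<1$, which holds because $\theta<1/2$.

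For the prime term I will fix $h_m$, set $d_m=1$, and obtain
\begin{equation*}
    \sum_n \mathbf 1_{\mathbf P}(n+h_m)\Lambda^2\sim c\,\frac{\theta}{2}\,J^{(m)}.
\end{equation*}
This requires equidistribution of primes to moduli up to $R^2=x^\theta$, which Bombieri--Vinogradov supplies. On the enlarged simplex one cannot compute $J^{(m)}$ fully. The fix is to retain only the part of the moduli where the remaining variables satisfy $\sum_{i\ne m}t_i\le 1-\varepsilon$, so that the modulus is at most $x^{\theta(1-\varepsilon)/ \dots}$ and lies in the BV range. Because $\Lambda^2\ge0$ and $\mathbf 1_{\mathbf P}\ge0$, the discarded region only lowers the count, and what survives is the lower bound $J_\varepsilon(F)$. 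This positivity-truncation is the key idea of \cite[Theorem 26]{PM8b}, and I expect it to be the main obstacle. The difficulty is to check that the cross terms from $\Lambda^2$ restricted to the truncated region behave correctly; I would first try a smooth partition of unity in $\sum_{i\ne m} t_i$ and apply Cauchy--Schwarz.

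The last step is to combine the two estimates:
\begin{equation*}
    \mathcal S\ge c\Big(k\,\frac{\theta}{2}\,J_\varepsilon(F)-m\,I_\varepsilon(F)\Big)+o(\cdot).
\end{equation*}
Taking $\theta\to1/2$, this is positive exactly when $kJ_\varepsilon/I_\varepsilon>4m$. Choosing $F$ with quotient above $4m$ therefore gives $\mathcal S>0$ for all large $x$. Consequently some translate $n+\mathcal H$ contains more than $m$ primes, that is, at least $m+1$, for arbitrarily large $n$, which is $\operatorname{DHL}[k,m+1]$.
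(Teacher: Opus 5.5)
Your outline has the right architecture: Maynard--Tao weights, Bombieri--Vinogradov at level $\theta<1/2$, the $1$-term evaluated on the whole enlarged simplex because $(1+\varepsilon)\theta<1$, and the final comparison $\tfrac{\theta}{2}kJ_\varepsilon(F)>mI_\varepsilon(F)$. However, the step you call the key idea is justified incorrectly.

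Write $\Lambda=\Lambda_1+\Lambda_2$, with $\Lambda_1$ the part coming from $\sum_{i\neq m}t_i\le 1-\varepsilon$. Nonnegativity of $\Lambda^2$ and $\mathbf{1}_{\mathbf{P}}$ does \emph{not} give $\mathbf{1}_{\mathbf{P}}(n+h_m)\Lambda(n)^2\ge\mathbf{1}_{\mathbf{P}}(n+h_m)\Lambda_1(n)^2$. Expanding the square produces a signed sum over pairs of divisor tuples, and dropping pairs is not monotone: the inequality fails pointwise whenever $\Lambda_2(n)=-\Lambda_1(n)\neq0$, so positivity alone proves nothing about the sum.

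Relatedly, your reasoning gives no reason for the cut to sit at $1-\varepsilon$. If $\Lambda^2\ge\Lambda_1^2$ held, the moduli in $\Lambda_1^2$ would only need to stay below $x^{\theta}$. You could then cut at $\sum_{i\neq m}t_i\le 1$ and obtain the full $\int_{\mathcal{R}_{k-1}}(\int F\,dt_m)^2$.

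The repair you sketch (partition of unity plus Cauchy--Schwarz) does not close the gap. An inequality such as $(a+b)^2\ge(1-\eta)a^2-(\eta^{-1}-1)b^2$ requires an upper bound for $\sum_n\mathbf{1}_{\mathbf{P}}(n+h_m)\Lambda_2(n)^2$, whose moduli reach $x^{\theta(1+\varepsilon)}$. Even granting such a bound, it loses a positive proportion, so it cannot return exactly $J_\varepsilon(F)$.

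The missing idea is the $\varepsilon$-trick of \cite{PM8b}, which the paper imports rather than proves. You discard only the square of the bad piece, $\Lambda^2\ge\Lambda_1^2+2\Lambda_1\Lambda_2$ (equivalently $(\Lambda-\Lambda_1)^2\ge0$), and then \emph{evaluate} the cross term.

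For $n+h_m$ prime only $d_m=1$ contributes. Hence the moduli in $\Lambda_1\Lambda_2$ are at most $R^{(1-\varepsilon)+(1+\varepsilon)}=R^2=x^{\theta}$, exactly the Bombieri--Vinogradov range. This is what dictates the cut at $1-\varepsilon$.

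With $G:=\int_0^\infty F\,dt_m$ and $G_1:=G\,\mathbf{1}_{(1-\varepsilon)\mathcal{R}_{k-1}}$, the main terms are $\int G_1^2+2\int G_1(G-G_1)=\int_{(1-\varepsilon)\mathcal{R}_{k-1}}G^2=J_\varepsilon(F)$. This holds after smoothing the cut and passing to the limit.

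A smaller slip concerns your weights. With coefficients $\prod_i\mu(d_i)\,F(\tfrac{\log d_1}{\log R},\dots,\tfrac{\log d_k}{\log R})$, the main terms are governed by mixed partial derivatives of $F$, not by $I_\varepsilon(F)$ and $J_\varepsilon(F)$. You need $\prod_i\mu(d_i)\,f(\cdots)$ with $F=\pm\partial_1\cdots\partial_k f$, or Maynard's $y_{\mathbf{r}}$ parametrisation. The truncation should then be applied to $G$, not directly to the sieve coefficients.
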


\subsection*{Sources of improvements}

To summarize the discussion, we can describe the usual framework as follows:
\begin{equation*}
    \text{GPY set-up} \quad \longrightarrow \quad
    \text{dealing with the error term} \quad \longrightarrow \quad
    \text{optimization problem}.
\end{equation*}
Historically, since the set-up itself was used for the first time, researchers have been finding improvements in all of the links of the above chain:
\begin{enumerate}
    \item 
        We can expand the sieve support by deploying stronger distributional claims than the Bombieri-Vinogradov theorem. This is the strategy Zhang followed to get the first result showing that $H_1$ is indeed finite \cite{Zhang}. The key innovations of Stadlmann may also belong to this category \cite{JuliaS_Zhang_paper, JuliaS}.
    \item
        Instead of focusing on BV-type claims, we can expand the sieve support by using various arithmetic tricks. One example of this sort is the $\varepsilon$-trick, first described in \cite{PM8b}.
    \item 
        One can instead modify the GPY framework itself. A prominent example is Maynard's key idea of considering multidimensional Selberg weights. 
        
        Another approach of this kind is to replace the prime indicator function $\mathbf{1}_{\mathbf{P}}$ with a suitable minorant \cite{BakerIrving}.
    \item 
        We can focus on the resulting optimization problems. In \cite{PM8b}, a substantial effort was put into numerical calculations involving $M_k$ and $M_{k, \varepsilon}$ to obtain $H_1\leq 246$ unconditionally and $H_1 \leq 6$ assuming the generalized Elliott-Halberstam conjecture. 
        
        Another example in this category is searching for shorter admissible $k$-tuples. While this part is already optimized in the most interesting $m=1$ case, there is still some room for improvement for $m \geq 2$.
\end{enumerate}
The approach of this paper sits firmly in the fourth category: the key motivation for this project was to study the variational quotient $M_k$ (as well as $M_{k, \varepsilon}$) for middle-sized $k \approx 50$, where some interesting novelties concerning $H_1$ may possibly lie. The key presumption was that these quotients can be estimated in a more efficient way than using high-degree symmetric polynomials, as in \cite{PM8b, JuliaS}.

In this project, several families of functions were tested. An interesting finding was that the simple choice of the sieving function
\begin{equation}\label{ansatz_1}
    F (t_1, \dots, t_k) =  
    \frac{Q(1 + \varepsilon - P_1)}{\prod_{i=1}^k (1 + at_i)},
\end{equation}
where $a > 0$, $P_1 := \sum_{i=1}^k t_i$, and $Q$ is a non-zero polynomial, works exceedingly well with Theorem \ref{PM_theorem} and can already match the bounds for $M_k$ obtained in Polymath8b.

\begin{table}[ht]
    \centering
    {\sffamily\small
    \setlength{\tabcolsep}{14pt}
        \begin{tabular}{cccc}
            \toprule
            $k$ & $\deg(Q)=0$ & $\deg(Q) \leq 4$ & PM8b (Krylov) \\
            \midrule
              2 & 1.38419 & 1.38518 & 1.38593 \\
              3 & 1.64093 & 1.64494 & 1.64644 \\
              5 & 1.99435 & 2.00501 & 2.00714 \\
             10 & 2.52060 & 2.54252 & 2.54547 \\
             20 & 3.09075 & 3.12287 & 3.12756 \\
             50 & 3.89468 & 3.93707 & 3.93586 \\
             53 & 3.94731 & 3.99017 & 3.98621 \\
             54 & 3.96422 & 4.00723 & 4.00223 \\
            100 & 4.52955 & 4.57584 & 4.46424 \\
            \bottomrule
        \end{tabular}
    }
    \caption{Lower bounds on $M_k$ (the $\varepsilon = 0$ case) obtained by optimizing functions of the form (\ref{ansatz_1}) with $a = k \log k$. We also provide the Polymath8b results obtained by the Krylov subspace method and the polynomial basis.}
    \label{tab_Mk}
\end{table}

The form (\ref{ansatz_1}) is inspired by the choice Maynard picked to get $H_m \ll m^3 \exp(4m)$ \cite{Maynard}. However, there are two differences in our approach:
\begin{enumerate}
    \item 
        Even when $k$ is very large, we evaluate the key integrals $I_{\varepsilon}(F)$ and $J_{\varepsilon}(F)$ directly. The product form of $F$ lets us write these integrals in a convolutional form. Then, we can compute them by applying the Laplace or Fourier transform.
    \item
        The reciprocal product is enriched by the polynomial $Q$, which gives us access to a bit more structure and leads to stronger numerics later. One natural idea would also be to include an extra 'free' (not multiplied by the product) polynomial to (\ref{ansatz_1}), but in various experiments optimizers tended to discard it.
\end{enumerate}

At this point, extrapolating the results from Table \ref{tab_Mk} clearly suggests that functions of the form (\ref{ansatz_1}) can lead to strong lower bounds for $H_2$, $H_3$, etc. The detailed analysis of the result below is described in Section \ref{section_numerics}.

\begin{thm}\label{DHL_claims}
    The following claims hold:
    \begin{equation*}
        DHL[\MTwoBound, 3],     \quad
        DHL[\MThreeBound, 4],    \quad
        DHL[\MFourBound, 5],    \quad
        DHL[\MFiveBound, 6].
    \end{equation*}
    This implies
    \begin{equation*}
        H_2 \leq \HTwoBound, \quad 
        H_3 \leq \HThreeBound, \quad    % 2563692
        H_4 \leq \HFourBound, \quad
        H_5 \leq \HFiveBound.
    \end{equation*}
\end{thm}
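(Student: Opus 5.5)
By Theorem \ref{PM_theorem}, it suffices to exhibit, for each pair $(k,m) \in \{(\MTwoBound,2), (\MThreeBound,3), (\MFourBound,4), (\MFiveBound,5)\}$, some $0 \le \varepsilon < 1$ and an explicit symmetric, locally square-integrable $F$ with
\begin{equation*}
    \frac{k J_\varepsilon(F)}{I_\varepsilon(F)} > 4m.
\end{equation*}
This yields $M_{k,\varepsilon} > 4m$ and hence $\operatorname{DHL}[k,m+1]$. The gap bounds then follow by the standard argument. For each such $k$ we take an admissible $k$-tuple of narrow diameter, constructed for example by sieving an interval by residue classes (Polymath-style greedy or Hensley--Richards type constructions), with diameter $\HTwoBound$, $\HThreeBound$, $\HFourBound$, $\HFiveBound$ respectively. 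Then $\operatorname{DHL}[k,m+1]$ gives infinitely many $n$ with $m+1$ primes in $n+\mathcal H$, so $H_m \le h_k - h_1$. Admissibility of each tuple is a finite check: only primes $p \le k$ need testing, since for $p > k$ some residue class is automatically missed.

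For $F$ we use the ansatz (\ref{ansatz_1}), $F = Q(1+\varepsilon-P_1)\prod_i (1+at_i)^{-1}$, with parameters $a$, $\varepsilon$ and a low-degree $Q$. Both functionals reduce to one-dimensional integrals via the product structure.

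\textbf{Computing $I_\varepsilon(F)$.} Set $g(t) = (1+at)^{-2}$ on $[0,\infty)$ and let $g^{*k}$ be its $k$-fold convolution, i.e. the density of $P_1$ weighted by the product. Then
\begin{equation*}
    I_\varepsilon(F) = \int_0^{1+\varepsilon} Q(1+\varepsilon-s)^2\, g^{*k}(s)\, ds .
\end{equation*}

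\textbf{Computing $J_\varepsilon(F)$.} The inner $t_k$-integral of $(1+at_k)^{-1} Q(1+\varepsilon - s - t_k)$ over $t_k \le 1+\varepsilon-s$, where $s = t_1+\dots+t_{k-1}$, is an explicit function $R(s)$: expand $Q$ in powers of $t_k$, which produces logarithms and polynomials. Hence
\begin{equation*}
    J_\varepsilon(F) = \int_0^{1-\varepsilon} R(s)^2\, g^{*(k-1)}(s)\, ds .
\end{equation*}

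The convolution powers $g^{*k}$ are computed through the Laplace transform $\hat g(z)^k$, with $\hat g$ expressed via exponential integrals, followed by numerical inversion (Bromwich contour or Talbot). Alternatively one can use a saddle-point or Fourier method.

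Since $I$ and $J$ are quadratic forms in the coefficients of $Q$, for fixed $(a,\varepsilon)$ the optimal $Q$ is the top generalized eigenvector of a small matrix pair. We then optimize over $a$ (near $k\log k$) and $\varepsilon$ by a one- or two-dimensional search.

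The main obstacle is making the numerics rigorous for $k$ in the range $10^3$ to $6\cdot 10^8$. Two issues arise:

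\begin{itemize}
\item The densities $g^{*k}$ are extremely concentrated. They must be evaluated with certified error bounds, for instance via interval arithmetic on the inversion integral with explicit tail truncation.
\item The truncation to the simplex must be handled exactly rather than approximated.
\end{itemize}

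A useful simplification is that we need only a lower bound for $J$ and an upper bound for $I$. So one can bound $I_\varepsilon$ from above by an exponentially tilted integral over all of $[0,\infty)^k$, a Rankin-type trick with explicit constants. For $J$ one can bound from below by restricting integration domains. Only the final inequality needs certification, with a small safety margin, rather than the optimization itself.
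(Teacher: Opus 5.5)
Your main line matches the paper's proof. You apply Theorem \ref{PM_theorem} to the ansatz (\ref{ansatz_1}), reduce $I_\varepsilon$ and $J_\varepsilon$ to Bromwich integrals of powers of $W(s;a)$, solve a generalized eigenproblem for the coefficients of $Q$ with $a\approx k\log k$, certify only the frozen final candidate, and use narrow admissible tuples for $H_m$. One structural difference is harmless. You invert to get $g^{*k}(X)$ pointwise and then integrate against $Q^2$ and $R^2$, which needs a second certified quadrature in $X$. The paper instead swaps the order first, via (\ref{tricky_integral_formula}) and $V_{Q,\varepsilon}=\mathcal{L}\{v_{Q,\varepsilon}^2\}$, so each functional becomes a single contour integral in $\tau$.

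What would fail is your proposed certification shortcut. The paper's margins are $M_{k,\varepsilon}-4m\approx 1.8\cdot 10^{-4},\ 4.7\cdot 10^{-6},\ 8.2\cdot 10^{-6},\ 2.5\cdot 10^{-5}$. That is, relative accuracy between roughly $2\cdot 10^{-5}$ and $4\cdot 10^{-7}$ is required. The Rankin bound $\mathbf{1}_{P_1\le 1+\varepsilon}\le e^{\sigma(1+\varepsilon-P_1)}$ loses a factor bounded away from $1$, because under the weight $\prod_i(1+at_i)^{-2}$ the sum $P_1$ fluctuates on scale $1/\log k$ around $1$.

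Concretely, take $Q=1$, $\varepsilon=0$, $\sigma=\log k$. Your bound is $e^{\sigma}W(\sigma;a)^k$, whereas by (\ref{from_IJ_to_Z}) the true value is $e^{\sigma}W(\sigma;a)^k Z_{I,k}$ with $Z_{I,k}\to C_0\approx 0.217$. That is an overestimate by a factor of about $4.6$. No choice of $\sigma$ brings this close to $1$; asymptotically the best choice still loses a factor of about $2$. A nonconstant $Q$ is worse, since $Q(1+\varepsilon-P_1)^2$ grows outside the simplex. The same objection applies to shrinking the domain of $J_\varepsilon$: anything you discard must lie below the margin. So the truncated integrals must be evaluated essentially exactly, as the paper does with rigorous Arb Gauss--Legendre quadrature on $|\tau|\le T$.

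Your "explicit tail truncation" also needs a real decay estimate on $T\le|\tau|\le 4k$, which you do not supply. The trivial bound $|W(s;a)|\le W(\sigma;a)$ is useless on a range of length $\asymp k$. The bound $|W(s;a)|\le 2/|s|$ only beats $W(\sigma;a)\approx 1/a$ once $|\tau|\gtrsim 2k$. The paper fills this range with Lemma \ref{monotonicity_lemma}, which bounds $|W(\sigma(1+i\tau);a)/W(\sigma;a)|^{k-1}$ by its value at $\tau=\pm T$. Beyond $4k$ it uses (\ref{tail_bound_W}).
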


We also prove the following asymptotic result, which explains the efficacy of the studied method.

\begin{thm}\label{asymptotyczny_teoremat}
    The following lower bound holds for $k$ sufficiently large:
    \[  M_k \geq
    \left( \log k - 
        2 \left( \gamma + \frac{C_1}{C_0} \right)
        + \frac{1}{\log k} \left( \gamma^2 + \frac{\pi^2}{6} + 2\gamma \frac{C_1}{C_0} + \frac{C_2}{C_0} \right) \right)
        \, + \, O \left( \frac{\log^4 k}{k}  \right), \]
    where $\gamma$ stands for the Euler-Mascheroni constant and, for any integer $m \geq 0$, the constants $C_m$ are defined as
    \begin{equation*}
        C_m := \frac{1}{2 \pi} \int \limits_{-\infty}^{\infty}
        e^{i \gamma \tau} (1 + i \tau)^{i \tau} \log^m (1 + i \tau)
        \, d \tau.
    \end{equation*}
    The integral above converges absolutely for all valid choices of $m$ and $C_0 > 0$.

    Moreover, we numerically obtain 
    \begin{equation*}
        M_k \geq  \log k - 0.273572 + o(1)
    \end{equation*}
\end{thm}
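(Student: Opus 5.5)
We take the test function $F(t_1,\dots,t_k)=\mathbf{1}_{\mathcal{R}_k}(t)\prod_{i=1}^k (1+at_i)^{-1}$, which is the case $\deg Q=0$ of (\ref{ansatz_1}), and we choose $a=k\log k$ up to lower-order corrections that are optimized at the end. Since $M_k\ge kJ(F)/I(F)$, it suffices to compute both functionals asymptotically.

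\textbf{Reduction to sums of independent variables.} Let $g(t)=(1+at)^{-2}$ on $[0,\infty)$, so that $\int_0^\infty g=1/a$. Then
\[
I(F)=a^{-k}\,\mathbb{P}(X_1+\dots+X_k\le 1),
\]
where the $X_i$ are i.i.d.\ with density $a g$. For $J$ we write the inner integral as
\[
\prod_{i<k}(1+at_i)^{-1}\cdot a^{-1}\log\bigl(1+a(1-P)\bigr),
\qquad P=t_1+\dots+t_{k-1}.
\]
This gives
\[
J(F)=a^{-(k-1)}a^{-2}\,\mathbb{E}\bigl[\log^2(1+a(1-S_{k-1}))\,\mathbf 1_{S_{k-1}\le1}\bigr],
\qquad S_{k-1}=X_1+\dots+X_{k-1},
\]
and hence
\[
\frac{kJ}{I}=\frac{k}{a}\cdot\frac{\mathbb{E}[\log^2(1+a(1-S_{k-1}))\mathbf 1]}{\mathbb{P}(S_k\le1)}.
\]
The variables $X_i$ have heavy tails: $\mathbb{P}(X>x)=1/(1+ax)$. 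With $a\approx k\log k$, their sum behaves like a stable law of index $1$. Concretely, $S_k-\frac{k}{a}\log(a)$ fluctuates on the scale $k/a\approx 1/\log k$, so we should normalize $a$ so that $\frac{k}{a}(\log a+c)=1-\Theta(1/\log k)$.

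\textbf{Laplace/Fourier transform.} Using
\[
\mathbb{E}e^{-sX}=1-\frac{s}{a}e^{s/a}E_1(s/a)=1-\frac{s}{a}\bigl(-\gamma-\log(s/a)\bigr)+O\!\left(\frac{s^2}{a^2}\log\frac{a}{s}\right),
\]
the $k$-th power with $s$ of order $a/k$ converges to
\[
\exp\!\bigl(\lambda(\gamma+\log\lambda)+\dots\bigr),\qquad \lambda=sk/a.
\]
This is precisely the Laplace transform of a Landau-type stable density. Inverting along a vertical line $s=(a/k)(1+i\tau)$ produces the integrand $e^{i\gamma\tau}(1+i\tau)^{i\tau}$, and the weights $\log^m(1+i\tau)$ arise from differentiating in the shift parameter. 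This is how $C_0,C_1,C_2$ enter. Write $\beta=\log k$ and $1-S/\ldots$ in scaled form $u$. The quantity $\log(1+a(1-S))=\log a+\log(1-S)+\dots$ expands as $\log(k\log k)+\log(\text{scaled gap}/\log k)$, so that $\log^2$ contributes $\log^2 k$ together with corrections involving $\mathbb{E}\log U$ and $\mathbb{E}\log^2U$ of the limiting gap variable $U$. These expectations are expressed through Mellin-type derivatives; this is the source of $\gamma$ and $\pi^2/6=\Gamma''(1)-\gamma^2$, and through the transform above, of $C_1/C_0$ and $C_2/C_0$. Multiplying by $k/a\approx 1/\log k$ and optimizing the free shift in $a$ then gives the stated three-term expansion. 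The error terms come from the next terms in the expansion of $\mathbb{E}e^{-sX}$ raised to the $k$-th power and from truncating the contour; each carries a relative factor $(\log k)^{O(1)}/k$, which yields $O(\log^4k/k)$.

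\textbf{Main obstacle and numerics.} The delicate step is the rigorous contour analysis. We need absolute convergence of the $C_m$ integrals, which follows from $|(1+i\tau)^{i\tau}|=e^{-\tau\arg(1+i\tau)}\approx e^{-\pi|\tau|/2}$. We also need uniform control of the $k$-th power on the whole contour, including $|\tau|\gg1$, where we would use $|\mathbb{E}e^{-sX}|\le 1-c\,\Re(\dots)$. Finally, $C_0>0$ must be established: $C_0$ is the limiting density value at a point, which we would show to be positive by identifying it with a stable density. The numerical statement $M_k\ge\log k-0.273572+o(1)$ should come from allowing a nonconstant $Q$ in (\ref{ansatz_1}), evaluated in the same limit. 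This reduces to a one-dimensional Rayleigh quotient against the limiting stable law, which we would solve numerically with a modest polynomial basis.
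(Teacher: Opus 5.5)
Your route is the paper's, recast in probabilistic language. The ingredients match one for one: $\mathbb{E}e^{-sX}=aW(s;a)=W(s/a;1)$, inversion along $s=\log k\,(1+i\tau)$, the $E_1$ expansion producing $e^{i\gamma\tau}(1+i\tau)^{i\tau}$, and $\Gamma'(1),\Gamma''(1)$ for the moments of $\log U$ (the paper's $(L+\partial_w)^2\Gamma(w)z^{-w}$ at $w=1$). With $a=k\log k$ and $\sigma=\log k$ exactly, your identity $kJ/I=\frac{k}{a}\,\mathbb{E}[\log^2(1+a(1-S_{k-1}))\mathbf{1}_{S_{k-1}\le 1}]/\mathbb{P}(S_k\le 1)$ does reproduce the stated expansion. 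The limit-law picture also gives a conceptual proof of $C_0>0$, which the paper only checks numerically ($C_0\approx 0.2167$). Note, though, that $C_0$ is a distribution-function value rather than a density value: the factor $1/(1+i\tau)$ comes from the indicator of $[0,1]$, and $C_0=e^{-\gamma}\,\mathbb{P}(Y\le 0)$ where $\mathbb{E}e^{-\lambda Y}=e^{\lambda(\gamma+\log\lambda)}$. Positivity still follows, since this Landau-type law charges $(-\infty,0]$.

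Your normalization heuristic is off. $S_k$ is centred at $\frac{k}{a}\log k$ (the truncated mean at level $k/a$), not at $\frac{k}{a}\log a$. Imposing $\frac{k}{a}(\log a+c)\approx 1$ would push $a$ to about $k(\log k+\log\log k)$, make the scaled gap of size $\log\log k$, and cost a $\log\log k$ in the bound. There is also nothing to optimize for this statement: the integrand $e^{i\gamma\tau}(1+i\tau)^{i\tau}$ and the constants $C_1/C_0$, $C_2/C_0$ arise only for $a=k\log k$ exactly, and a shift $a=k(\log k+\delta)$ changes them.

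The genuine gap is the uniform control of $|W(\sigma(1+i\tau);a)/W(\sigma;a)|^{k-1}$ outside $|\tau|\le 100\log k$. You leave this as an unspecified inequality, but it is exactly where the paper needs its two lemmas.

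For $100\log k\le|\tau|\le 4k$, the variable $z=(1+i\tau)/k$ reaches size of order $1$, so the small-$z$ expansion of $W(z;1)$ is unavailable. The paper proves that $|W(\sigma_0+i\omega;a)|$ is strictly decreasing in $|\omega|$ (Lemma~\ref{monotonicity_lemma}). This reduces the whole range to the single point $\tau=T=100\log k$, where the expansion still applies and gives a factor $\ll e^{-\pi T/2}$.

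For $|\tau|\ge 4k$, the paper uses $|W(z;1)|\le 2/|z|$ together with $W(z_0;1)>0.8$ for $z_0\le 1/20$ (Lemma~\ref{properties_of_W}), which yields decay like $\tau^{-(k-1)}$. A bound that merely keeps $|W(s;a)/W(\sigma;a)|\le 1-c$ cannot close this far tail. In the $I$-integral the factor $(e^{s}-1)/s$ has modulus $\asymp 1/|\tau|$ relative to its value at $s=\sigma$, so the remaining integral diverges logarithmically. You need either actual decay in $|\tau|$ or a smoothing inequality in place of exact inversion.

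Finally, for the numerical constant, $Q$ has to live on the gap scale: $Q(x)=\widetilde{Q}(x\log k)$ with $\widetilde{Q}$ fixed. A $k$-independent $Q$ only affects the $O(1/\log k)$ term. The expansion of $\int_0^x\widetilde{Q}(x-u)\,(u+1/k)^{-1}\,du$ must then be carried out as in the paper.
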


We conclude that the lower bound is only $\approx 0.274$ away from the (not sharp) upper bound for $M_k$ provided in \cite[Corollary 37]{PM8b} as $k \rightarrow \infty$.

\subsection*{AI declaration}

The author of this paper was assisted by LLMs. The most notable examples of such assistance are:
\begin{itemize}
    \item 
        The idea to exploit the convolutional structure of the key functionals via the Laplace transform was suggested by GPT 5.6 Sol at an early stage of this project when the author was trying to develop a more numerically stable optimizer for a wider family of Polymath8b-related problems.
    \item
        Several tested optimizers were developed with significant assistance from GPT 5.6 Sol/GPT 6 and Claude Opus 5/5.5.
    \item 
        Some useful technical identities, for example (\ref{tricky_integral_formula}) or (\ref{from_MT_to_Gamma}), were proposed by GPT 5.6 Sol/GPT 6.
    \item
        Checking mathematical arguments for potential errors and general proofreading.
\end{itemize}

\clearpage

\section{Initial reduction}\label{section_laplace}

In this section we show that the integrals $I_\varepsilon (F)$ and $J_\varepsilon (F)$ with $F$ of the form (\ref{ansatz_1}) can be reduced to integrals of a single variable.

\begin{thm}\label{theorem_laplace_form}
    Fix an integer $k \geq 2$. Let $a \geq 0 $ and $0 \leq \varepsilon < 1$ be fixed real parameters. Let $Q$ be a fixed non-zero real polynomial. Define $P_1 := \sum_{i=1}^k t_i$ and consider
    \begin{equation*}
        F(t_1, \dots , t_k ) = 
        \frac{Q(1 + \varepsilon - P_1)}{\prod_{i=1}^k (1 + at_i)},
    \end{equation*}
    Then, for any fixed $\sigma>0$, we have
    \begin{align}
        I_\varepsilon (F) &= \frac{1}{2 \pi i}
        \int \limits_{\sigma - i \infty}^{\sigma + i \infty}
        K_{Q, \varepsilon}(s) \,
        W (s; a)^k \, ds, \label{I(F)_Laplace_form} 
        \\
        J_{\varepsilon} (F) &= \frac{1}{2\pi i}
        \int \limits_{\sigma-i\infty}^{\sigma+i\infty} 
        e^{(1 - \varepsilon)s} \,W(s;a)^{k-1} \,
        V_{Q,\varepsilon}(s; a)\,ds,  \label{J(F)_Laplace_form} 
    \end{align}
    where
    \begin{equation}\label{definition_W_V}
        \begin{aligned}
            W(s;a) &:= \int \limits_0^\infty 
            \frac{e^{-st} \, dt}{(1 + at)^2},
            \\
            V_{Q,\varepsilon} (s; a) &:=
            \int \limits_0^\infty 
            \left( \int \limits_0^{t + 2 \varepsilon}
            \frac{Q(t + 2 \varepsilon - y)}{1 + ay} \, dy \right)^2
            e^{-st} \, dt,
            \\
            K_{Q,\varepsilon}(s) &:=  Q \left( 1 + \varepsilon - \frac{d}{ds} \right)^2 
            \left( \frac{e^{(1 + \varepsilon)s} - 1}{s} \right).
        \end{aligned}
    \end{equation}
\end{thm}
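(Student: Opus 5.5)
We reduce both functionals to one-dimensional convolutions and then apply Laplace inversion. Set $g(t) := (1+at)^{-2}$ for $t \geq 0$, and let $g^{*j}$ be its $j$-fold convolution on $[0,\infty)$, i.e. the "density of $t_1+\dots+t_j$" with respect to the weight $\prod_i g(t_i)$. By Fubini and the convolution theorem, the Laplace transform of $g^{*j}$ is $W(s;a)^j$ for $\operatorname{Re} s > 0$. Everything then follows from the inversion formula $g^{*j}(u) = \frac{1}{2\pi i}\int_{\sigma - i\infty}^{\sigma+i\infty} W(s;a)^j e^{su}\,ds$, or its analogue for a convolution with a second function, evaluated at a single point.

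\textbf{Step 1 ($I_\varepsilon$).} Since $F^2 = Q(1+\varepsilon-P_1)^2 \prod_i g(t_i)$, grouping by $u = P_1$ gives
\[ I_\varepsilon(F) = \int_0^{1+\varepsilon} Q(1+\varepsilon-u)^2\, g^{*k}(u)\,du. \]
We insert the inversion formula for $g^{*k}(u)$ and swap the $u$- and $s$-integrals; this is legitimate because the $u$-range is compact and the $s$-integrand is absolutely integrable (see Step 3). The inner integral is $\int_0^{1+\varepsilon} Q(1+\varepsilon-u)^2 e^{su}\,du$. Since $\frac{d}{ds} e^{su} = u e^{su}$, the operator $(1+\varepsilon - \frac{d}{ds})$ multiplies $e^{su}$ by $(1+\varepsilon-u)$. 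Hence this integral equals
\[ Q\!\left(1+\varepsilon-\tfrac{d}{ds}\right)^2 \int_0^{1+\varepsilon} e^{su}\,du = K_{Q,\varepsilon}(s), \]
which gives (\ref{I(F)_Laplace_form}).

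\textbf{Step 2 ($J_\varepsilon$).} Write $P' = t_1+\dots+t_{k-1}$ and $\varphi(x) := \int_0^x Q(x-y)(1+ay)^{-1}\,dy$. For $P' \le 1-\varepsilon$, the upper limit of the $t_k$-integral is $1+\varepsilon-P' \ge 2\varepsilon \geq 0$. The inner integral therefore equals $\varphi(1+\varepsilon-P')\prod_{i<k}(1+at_i)^{-1}$, and its square carries the weight $\prod_{i<k} g(t_i)$. Grouping by $v = P'$ and substituting $t = 1-\varepsilon - v$ gives
\[ J_\varepsilon(F) = \int_0^{1-\varepsilon} \varphi(t+2\varepsilon)^2\, g^{*(k-1)}(1-\varepsilon-t)\,dt = (h * g^{*(k-1)})(1-\varepsilon), \qquad h(t) := \varphi(t+2\varepsilon)^2 . \]
The convolution at the point $1-\varepsilon$ only sees $t \in [0,1-\varepsilon]$, so extending $h$ to all of $[0,\infty)$ is harmless. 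The Laplace transform of $h$ is exactly $V_{Q,\varepsilon}(s;a)$, so the transform of the convolution is $V_{Q,\varepsilon} W^{k-1}$. Pointwise inversion at $u = 1-\varepsilon$ then yields (\ref{J(F)_Laplace_form}).

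\textbf{Step 3 (justification; the main obstacle).} The real work is justifying the pointwise inversion and the Fubini swap, i.e. absolute integrability on $\operatorname{Re}s=\sigma$. Integrating by parts once gives
\[ W(s;a) = \frac{1}{s} - \frac{2a}{s}\int_0^\infty \frac{e^{-st}}{(1+at)^3}\,dt, \]
so $W = O(1/|s|)$ on the vertical line. Similarly $h$ is continuous on $[0,\infty)$, of at most polynomial times logarithmic growth, and has a locally integrable derivative, so $V_{Q,\varepsilon}(s) = h(0)/s + O(|s|^{-2}) = O(1/|s|)$. For $K_{Q,\varepsilon}$, we apply $\frac{d}{ds}$ repeatedly to $(e^{(1+\varepsilon)s}-1)/s$: each term is bounded by $O_\sigma(1/|s|)$, because derivatives of $1/s$ only improve the decay and $e^{(1+\varepsilon)s}$ is bounded on the line. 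Hence the integrands are $O(|s|^{-k-1})$ and $O(|s|^{-k})$ respectively, which are absolutely integrable since $k \geq 2$.

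With absolute integrability of the transform in hand, together with continuity and local bounded variation of $g^{*k}$ and of $h*g^{*(k-1)}$ (both are continuous on $(0,\infty)$ as convolutions of locally integrable functions with at least one continuous factor), the standard Laplace inversion theorem gives equality at every point. This completes both formulas. The degenerate case $a=0$ fits the same argument, since then $W = 1/s$.
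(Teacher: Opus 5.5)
Your proposal is correct and follows essentially the same route as the paper. You group by $P_1$ (resp.\ $t_1+\dots+t_{k-1}$) to obtain $\int_0^{1+\varepsilon} Q(1+\varepsilon-u)^2\, w^{*k}(u)\,du$ and $(w^{*(k-1)} * v_{Q,\varepsilon}^2)(1-\varepsilon)$, then Laplace-invert and apply the operator identity for $\int_0^{1+\varepsilon} Q(1+\varepsilon-u)^2 e^{su}\,du$; the only differences are that you derive that identity by differentiating $e^{su}$ under the integral instead of the paper's term-by-term Taylor expansion, and that you spell out the $O(1/|s|)$ decay of $W$, $V_{Q,\varepsilon}$, $K_{Q,\varepsilon}$ and the hypotheses for pointwise inversion, which the paper only asserts.
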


\begin{proof}
First, note that the integrals (\ref{I(F)_Laplace_form}) and (\ref{J(F)_Laplace_form}) converge absolutely due to

\begin{equation*}
    W(\sigma + i \omega;a) \ll |\omega|^{-1}, \quad
    V_{Q, \varepsilon}(\sigma + i \omega;a) \ll |\omega|^{-1}, \quad
    \frac{d^m}{ds^m} 
    \left( \frac{e^{(1+\varepsilon)s} - 1}{s} \right)
    \ll_m |\omega|^{-1}.
\end{equation*}

Let us start with $I_\varepsilon (F)$. Put  $X=P_1$ and rearrange
\begin{equation*}
    I_{\varepsilon}(F) = \int \limits_0^{1+\varepsilon} 
    Q(1 + \varepsilon - X)^2 \, C_k (X; a) \, dX,
\end{equation*}
where we define $w(t;a) := \frac{1}{(1+at)^2}$ and

\begin{equation*}
    C_k(X; a) := \int 
    \limits_{\substack{t_1, \dots , t_k \ge 0 \\ t_1+\cdots+t_k = X}}
    w(t_1;a) \cdots w(t_k;a) \, d \sigma = 
    w^{*k}(X;a).
\end{equation*}
In the above integral, we lifted the Lebesgue measure $dt_1 \cdots dt_{k-1}$ up to the hyperplane $t_1 + \dots + t_k = X$. We define the Laplace transform of $w$:

\begin{equation}\label{W_definition}
    W(s;a) := \mathcal{L} \left\{ w(\cdot; a) \right\} (s).
\end{equation}
Switching to the frequency domain allows us to turn a convolution into a tractable product:

\begin{equation}\label{C_k_conv_def}
    C_k(X; a) = \mathcal{L}^{-1} \left\{ W(s;a)^k \right\}(X) =
    \frac{1}{2 \pi i}
    \int \limits_{\sigma - i \infty}^{\sigma + i \infty}
    e^{sX} \, W (s; a)^k \, ds.
\end{equation}
By the Fubini-Tonelli theorem, it suffices to show that

\begin{equation}\label{tricky_integral_formula}
    \int \limits_0^{1 + \varepsilon} Q(1 + \varepsilon - X)^2 
    \, e^{sX} \, dX = 
    Q \left( 1 + \varepsilon - \frac{d}{ds} \right)^2 
    \left( \frac{e^{(1 + \varepsilon)s} - 1}{s} \right).
\end{equation}
Indeed, if we decompose $Q(1 + \varepsilon -X)^2$ into a weighted sum of monomials $X^n$ for $n \geq 0$, then we can work out (\ref{tricky_integral_formula}) term by term. By the Taylor expansion, we get

\begin{equation*}
        \int_0^{1+\varepsilon}X^ne^{sX}\,dX 
        = \sum_{m=0}^{\infty} 
        \frac{(1 + \varepsilon)^{m + n + 1}s^m}{m!(m+n+1)} 
        = \frac{d^n}{ds^n}\left(\frac{e^{(1+\varepsilon)s}-1}{s}\right).
\end{equation*}

For the $J_{\varepsilon}(F)$ part, we consider $X=t_1 + \cdots + t_{k-1}$ and obtain

\begin{equation*}
    J_{\varepsilon}(F) =
    \int \limits_0^{1 - \varepsilon} C_{k-1}(X;a)
    \left( \int \limits_0^{1 + \varepsilon - X}
    \frac{Q (1 + \varepsilon - X - t_k)}{1+a t_k} \, dt_k \right)^2
    \,dX = (C_{k-1} * v_{Q,\varepsilon}^2) (1 - \varepsilon) ,
\end{equation*}
where $v_{Q,\varepsilon}$ stands for the inner integral, that is

\begin{equation*}
    v_{Q,\varepsilon} (y;a) := 
    \int \limits_0^{y + 2 \varepsilon}
    \frac{Q(y + 2 \varepsilon - t)}{1+a t} \, dt .
\end{equation*}
Noting that $V_{Q,\varepsilon} (s; a) = 
\mathcal L\{v_{Q,\varepsilon} (\cdot; a)^2 \} (s)$ we reach the desired conclusion.

\end{proof}

% There is also a little technical lemma that allows us to discard certain functions when estimating tails. 

% \begin{lemma}[Majorization]
%     We apply the inequalities
%     \begin{equation*}
%         \left| \frac{W(s;a)}{W(\sigma ;a)} \right| \leq 1, \quad
%         \left| \frac{V(s;a)}{V(\sigma ;a)} \right| \leq 1, \quad
%         Q \left( 1 + \varepsilon - \frac{d}{ds} \right)^2 
%         \left( \frac{e^{(1 + \varepsilon)s} - 1}{s} \right) \leq
%         Q \left( 1 + \varepsilon - \frac{d}{ds} \right)^2 
%         \left( \frac{e^{(1 + \varepsilon) \sigma} - 1}{\sigma} \right).
%     \end{equation*}
% \end{lemma}

% \[
%     |\mathcal Lg(\sigma+i\omega)|\le\mathcal Lg(\sigma)
% \]

\subsection*{Convergence as $k \rightarrow \infty$}

In the following discussion, we study how the integrals described in Theorem \ref{theorem_laplace_form} behave as functions of the involved parameters. To simplify, we focus on the case $\varepsilon = 0$ and $Q = 1$. It is quite intuitive why this might be general enough. As $k$ grows, the optimal $\varepsilon$ usually sits very close to $0$ anyway. Also, the polynomial $Q$ is merely a correction term; the main contribution to any lower bound for $M_k$ in this paper comes from the product of reciprocals. 

We define
\begin{equation}\label{F_base_defined}
    V(s;a) := V_{1, 0}(s;a), \quad 
    K (s) := \frac{e^s - 1}{s}, \quad
    F_{\text{base}}(t_1, \dots, t_k) := \prod_{i=1}^k \frac{1}{1 + at_i}. 
\end{equation}
While technically speaking $F_{\text{base}}$ depends on $k$, we do not explicitly mention that in our notation for the sake of brevity. Let us normalize the resulting integrals to control their magnitudes as $k \rightarrow \infty$:

\begin{align}
    I(F_{\text{base}}) &= 
    \frac{K (\sigma) \, W(\sigma; a)^k}{2 \pi i}
    \int \limits_{\sigma - i \infty}^{\sigma + i \infty}
    \frac{K (s)}{K (\sigma) }  
    \left( \frac{W (s; a)}{W(\sigma; a)} \right)^k \, ds, 
    \label{norm_I(F)} \\
    J(F_{\text{base}}) &= 
    \frac{e^{\sigma} \, W(\sigma, a)^{k-1} V(\sigma, a)}{2 \pi i}
    \int \limits_{\sigma - i \infty}^{\sigma + i \infty}
    e^{s - \sigma} 
    \frac{V (s; a)}{V(\sigma; a)} 
    \left( \frac{W (s; a)}{W(\sigma; a)} \right)^{k-1} 
    \, ds. \label{norm_J(F)}
\end{align}

We can now plot the integrands and experiment with various parameter choices, namely $a$, $\sigma$, and the scale itself. The conclusions are as follows.

\begin{itemize}
    \item 
        If we consider the magnitudes of these functions, their weights are visibly concentrated on the interval $|\operatorname{Im}(s)| \ll \log k$. The decay rate seems rapid, so bounding the tails should not be a serious obstacle.
    \item
        The integrated functions might oscillate too fast to allow good control. Among the choices tested, $\sigma=\log k$ looks natural enough and seems to almost fully stabilize the phase.
    \item
        Motivated by Maynard's original choice in \cite{Maynard}, we pick $a = k \log k$. When we calculate the lower bounds for $M_k$ numerically in Section \ref{section_numerics}, we find that this choice of $a$ is indeed close to optimal.    
    \item
        The magnitude appears to decrease along the vertical half-line $\operatorname{Im}(s)>0$. In fact, this property holds for $|W(s;a)|$ alone, as we prove in Lemma \ref{monotonicity_lemma}.
\end{itemize}

In Section \ref{section_asymptotic} we describe in detail the asymptotic limit of the integrals $I(F_{\text{base}})$ and $J(F_{\text{base}})$. As a consequence of this analysis, we deduce Theorem \ref{asymptotyczny_teoremat}.

\begin{figure}[h]
    \centering
    \includegraphics[width=14cm]{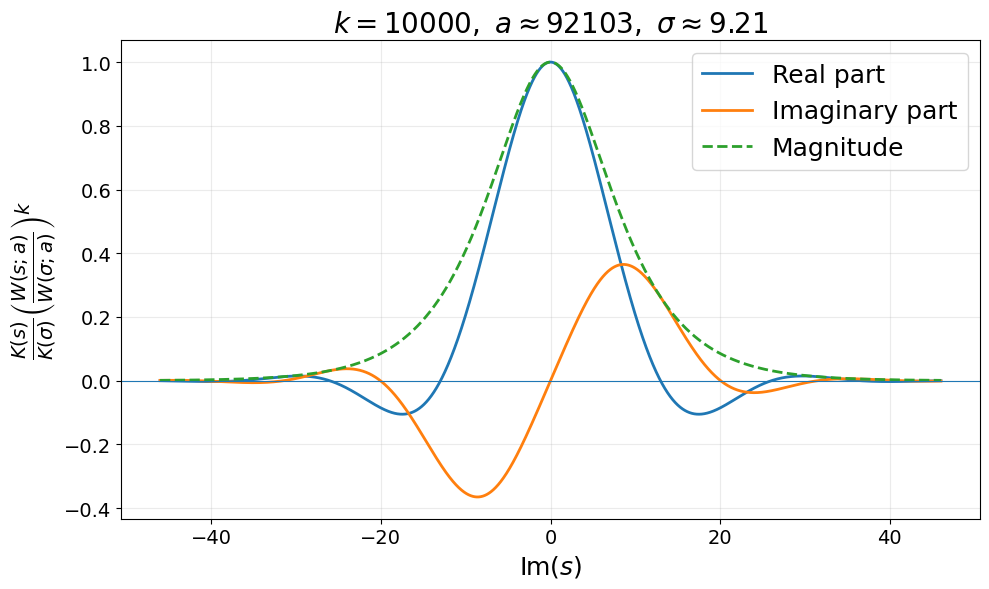}
    \caption{The integrand from the right-hand side of (\ref{norm_I(F)}) with $k=10\,000$, $a=k \log k$, and $\sigma = \log k$.}
    \label{integral_plot}
\end{figure}

\section{Properties of the $W$ function}

\begin{lemma}[Monotonicity]\label{monotonicity_lemma}
    Fix $a, \sigma_0 >0$. The real function $\omega \mapsto \left| W(\sigma_0 + i \omega ; a) \right|$ is even and strictly decreasing on the positive half-line.
\end{lemma}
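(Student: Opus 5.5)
The plan is to write $W(\,\cdot\,;a)$ as a Stieltjes transform of a positive density and then read off both properties from an explicit formula for $|W|^2$.

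\emph{Evenness.} Since $w(t;a)=(1+at)^{-2}$ is real, we have $W(\bar s;a)=\overline{W(s;a)}$. Hence $|W(\sigma_0-i\omega;a)|=|W(\sigma_0+i\omega;a)|$.

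\emph{Stieltjes representation.} Use the identity
\begin{equation*}
\frac{1}{(1+at)^2}=\int_0^\infty x\,e^{-x(1+at)}\,dx .
\end{equation*}
Insert it into the definition (\ref{W_definition}) and apply Fubini–Tonelli, which is legitimate for $\operatorname{Re}s=\sigma_0>0$ because everything is absolutely integrable. This gives
\begin{equation*}
W(s;a)=\int_0^\infty \frac{x e^{-x}}{s+ax}\,dx=\int_0^\infty \frac{\rho(u)}{s+u}\,du,
\qquad \rho(u):=\frac{u e^{-u/a}}{a^2}>0 .
\end{equation*}
Now put $A=\sigma_0+u$ and $B=\sigma_0+v$, both positive. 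Then
\begin{equation*}
|W(\sigma_0+i\omega;a)|^2=\int_0^\infty\!\!\int_0^\infty \frac{\rho(u)\rho(v)}{(A+i\omega)(B-i\omega)}\,du\,dv .
\end{equation*}
The left side is real. Alternatively, symmetrizing in $(u,v)$ shows that only the real part of the integrand contributes. Either way,
\begin{equation*}
|W(\sigma_0+i\omega;a)|^2=\iint \rho(u)\rho(v)\, f_{A,B}(\omega^2)\,du\,dv,\qquad f_{A,B}(x):=\frac{AB+x}{(A^2+x)(B^2+x)} .
\end{equation*}

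\emph{Monotonicity.} For $x\ge 0$ the function $f_{A,B}$ is positive, and its logarithmic derivative is
\begin{equation*}
\frac{1}{AB+x}-\frac{1}{A^2+x}-\frac{1}{B^2+x}.
\end{equation*}
Because $AB\ge\min(A^2,B^2)$, the first term is at most one of the subtracted terms. So the logarithmic derivative is strictly negative, and $f_{A,B}$ is strictly decreasing in $x$.

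Since $\rho>0$ on a set of positive measure, integrating the pointwise strict inequality $f_{A,B}(x_1)>f_{A,B}(x_2)$ for $x_1<x_2$ gives the strict decrease of $|W|^2$ as a function of $\omega^2$. This works directly on differences, so no differentiation under the integral sign is needed. Finally, $\omega\mapsto\omega^2$ is increasing on $(0,\infty)$, so $|W(\sigma_0+i\omega;a)|$ is strictly decreasing there.

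\emph{Main obstacle.} The only genuinely nontrivial step is finding the positive Stieltjes representation, that is, recognising $w$ as completely monotone. After that, the sign check on $f_{A,B}$ is elementary, and the integrability is routine: $|f_{A,B}|\le 1/(AB)\le \sigma_0^{-2}$ and $\rho$ is integrable.
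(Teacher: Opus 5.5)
Your proof is correct and is essentially the paper's argument: both rest on the representation $W(s;a)=\int_0^\infty \frac{x e^{-x}}{s+ax}\,dx$ and on the kernel $\frac{AB+\omega^2}{(A^2+\omega^2)(B^2+\omega^2)}$ in the double integral for $|W|^2$, which is strictly decreasing in $\omega^2$. The differences are only technical and slightly in your favour. You obtain the representation by Fubini from $(1+at)^{-2}=\int_0^\infty x e^{-x(1+at)}\,dx$, whereas the paper integrates by parts and rotates a contour. You also compare values at two points rather than differentiating under the integral sign, and you show the kernel is monotone with a logarithmic derivative where the paper uses the partial-fraction form $\frac{1}{A+B}(\frac{A}{A^2+\omega^2}+\frac{B}{B^2+\omega^2})$.
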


\begin{proof}
    
 It follows from the complex conjugation that the function is even. To deal with monotonicity, we put $s=\sigma_0 + i \omega$ and integrate by parts:

 \begin{equation*}
     W(s;a) = \int \limits_0^{\infty}  \frac{e^{-st} \, dt}{(1 + at)^2} =
     \frac{1}{a} - \frac{s}{a} \int \limits_0^{\infty} 
     \frac{e^{-st} \, dt}{1 + at}.
 \end{equation*}
 We substitute $u=st$. Then, we can rotate the contour of integration to the real half-line, because there are no singularities on the way. Since $|\operatorname{Arg}(s)| < \pi / 2$, the connecting arc and the far tail are arbitrarily small if the radius is big enough. Hence, 

 \begin{equation*}
      W(s;a) = \frac{1}{a} - 
      \frac{s}{a} \int \limits_0^{\infty} 
     \frac{e^{-u} \, du}{s + au} = 
      \frac{s}{a} \int \limits_0^{\infty} 
      \left( \frac{1}{s}  - \frac{1}{s + au} \right) e^{-u} \, du =
      \int \limits_0^{\infty}  \frac{u e^{-u} \, du}{s + au}
 \end{equation*}
We need to separate the real and the imaginary parts of the denominator. We put $r_u := \sigma_0 + au$ and get

\begin{equation*}
    W(\sigma_0 + i \omega;a) =
    \int \limits_0^{\infty}  \frac{u e^{-u} \, du}{r_u + i \omega} =
    \int \limits_0^{\infty}  
    \frac{u e^{-u} (r_u - i \omega)}{r_u^2 + \omega^2} \, du.
\end{equation*}
Taking the squared absolute values leads to

\begin{align*} 
    \left| W(\sigma_0 + i\omega;a) \right|^2 &=
    \int \limits_{0}^{\infty} \int \limits_{0}^{\infty}
    uv e^{-u-v} \frac{r_u r_v + \omega^2} 
    {(r_u^2 + \omega^2)(r_v^2 + \omega^2)} \, du \, dv \\[4pt] 
    &=
    \int \limits_{0}^{\infty} \int \limits_{0}^{\infty}
    \frac{uv e^{- u - v}}{r_u + r_v} 
    \left( \frac{r_u}{r_u^2+\omega^2} +\frac{r_v}{r_v^2 + \omega^2} \right) 
    du \, dv,
\end{align*} 
where in the first line we exploited the symmetry between $u$ and $v$ to discard $ \pm i \omega (r_u - r_v)$ from the numerator. Consequently, by the dominated convergence theorem, for every $\omega > 0$ we have

\begin{equation*}
    \frac{d}{d \omega} \left| W(\sigma_0 + i\omega;a) \right|^2
    = -2 \omega
    \int \limits_{0}^{\infty} \int \limits_{0}^{\infty}
    \frac{uv e^{-u-v}}{r_u + r_v} \left( \frac{r_u}{(r_u^2 + \omega^2)^2} + \frac{r_v}{(r_v^2 + \omega^2)^2} \right) du \, dv <0. 
\end{equation*}

\end{proof}

\begin{lemma}\label{properties_of_W}
    Let $z$ be a complex number with $z_0 := \operatorname{Re}(z)>0$. The function $W$, as defined in (\ref{W_definition}), has the following properties.
    \begin{enumerate}    
        \item (The scaling identity) For any real $d>0$, we have
            \begin{equation}\label{scaling_W}
                W(z;d) = \frac{1}{d} \, W \left( \frac{z}{d};1 \right)
            \end{equation}
        \item 
            (Explicit form) We have
            \begin{equation}\label{explicit_form_W}
                W(z;1) = 1 - z e^z E_1(z),
            \end{equation}
            where $E_1(z) := \int_{z}^{\infty} \frac{e^{-t}}{t} \, dt$. 
        \item
            (Tail bound) Suppose additionally that $z_0 \leq 1/20$, then
            \begin{equation}\label{tail_bound_W}
                \left| \frac{W(z;1)}{W(z_0; 1)} \right| \leq
                \frac{5}{2 |z |}.
            \end{equation}  
    \end{enumerate}
    % \begin{equation*}
    %     \left| \frac{W(s;a)}{W(\sigma; a)} \right| \leq
    %     \frac{2k}{0.99 |\tau |}
    % \end{equation*}
\end{lemma}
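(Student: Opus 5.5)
The scaling identity (\ref{scaling_W}) is a direct substitution. In $W(z;d)=\int_0^\infty e^{-zt}(1+dt)^{-2}\,dt$ we put $u=dt$, which gives $\frac1d\int_0^\infty e^{-(z/d)u}(1+u)^{-2}\,du=\frac1d W(z/d;1)$.

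For the explicit form (\ref{explicit_form_W}) we integrate by parts once, as in the proof of Lemma \ref{monotonicity_lemma}:
\begin{equation*}
W(z;1)=1-z\int_0^\infty \frac{e^{-zt}}{1+t}\,dt .
\end{equation*}
The substitution $v=1+t$ turns the remaining integral into $e^{z}\int_1^\infty e^{-zv}v^{-1}\,dv$. Then $w=zv$, together with a contour rotation justified exactly as in Lemma \ref{monotonicity_lemma} (no singularities, $|\operatorname{Arg} z|<\pi/2$, vanishing arcs), gives $e^z\int_z^\infty e^{-w}w^{-1}\,dw=e^zE_1(z)$. This yields $W(z;1)=1-ze^zE_1(z)$.

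For the tail bound (\ref{tail_bound_W}) we bound the numerator and denominator separately. For the numerator, integrating by parts twice gives
\begin{equation*}
W(z;1)=\frac1z-\frac{2}{z}\int_0^\infty \frac{e^{-zt}}{(1+t)^3}\,dt .
\end{equation*}
Hence $|W(z;1)|\le \frac{1}{|z|}\bigl(1+2\int_0^\infty e^{-z_0t}(1+t)^{-3}\,dt\bigr)\le \frac{2}{|z|}$, using $\int_0^\infty(1+t)^{-3}\,dt=\tfrac12$. This could be sharpened, but it is likely enough. For the denominator we need $W(z_0;1)\ge 4/5$ when $0<z_0\le 1/20$. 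Since $W(\cdot;1)$ is decreasing on the positive reals, it suffices to check $W(1/20;1)\ge 0.8$. We use $W(x;1)=1-xe^xE_1(x)$ with the bound $E_1(x)\le e^{-x}\log(1+1/x)$, which gives $W(x;1)\ge 1-x\log(1+1/x)$. At $x=1/20$ this is $1-0.05\log 21\approx 1-0.1522=0.8478>0.8$. Combining, the ratio is at most $\frac{2}{|z|}\cdot\frac{5}{4}=\frac{5}{2|z|}$.

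The main obstacle is the constant bookkeeping in the tail bound. The numerator bound $2/|z|$ and the denominator bound $4/5$ are only just sufficient, so each estimate must be checked carefully. If the inequality $E_1(x)\le e^{-x}\log(1+1/x)$ is not taken as standard, it can be replaced by a direct estimate: write $xe^xE_1(x)=x\int_0^\infty e^{-xt}(1+t)^{-1}\,dt$ and split the integral at $t=1/x$.
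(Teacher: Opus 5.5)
Your proof is correct and follows the paper's route: the same substitution gives the scaling identity, the same integration by parts followed by $z(1+t)\mapsto w$ gives the explicit form, and the same identity $W(z;1)=\frac{1}{z}-\frac{2}{z}\int_0^\infty e^{-zt}(1+t)^{-3}\,dt$ gives $|W(z;1)|\le 2/|z|$. The only difference is cosmetic and concerns the lower bound on $W(z_0;1)$: you reduce to $z_0=1/20$ using the monotonicity of $x\mapsto W(x;1)$ and the classical bound $E_1(x)<e^{-x}\log(1+1/x)$ (obtaining about $0.848$), whereas the paper uses the cruder $E_1(z_0)\le\log(1/z_0)+e^{-1}$ and checks by differentiation that the resulting lower bound is decreasing on $(0,1/20]$ (obtaining about $0.823$, which still exceeds $0.8$).
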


\begin{proof}
    The scaling property follows directly from the definition. To get the explicit form, we integrate by parts to get
    \begin{equation*}
        W(z;1) = 1 - z \int \limits_0^{\infty} \frac{e^{-zt}}{1+t} \, dt.
    \end{equation*}
    It now suffices to substitute $z(1+t) \mapsto u$.
    
    For the tail bound, we integrate by parts in the other direction and get
    
    \begin{equation}\label{lemma_upper_bound_W}
        W(z;1) = \frac{1}{z} 
        - \frac{2}{z} \int_0^{\infty} \frac{e^{-zt}}{(1+t)^3} \, dt 
        \quad \Longrightarrow \quad 
        |W(z;1)| \leq \frac{2}{|z|}.
    \end{equation}
    Now we only need a decent lower bound on $|W(z_0; 1)|$. We use the explicit form (\ref{explicit_form_W}) together with
    \begin{equation*}
        E_1(z_0) =
        \left( \int \limits_{z_0}^1 + \int \limits_1^{\infty} \right) 
        \frac{e^{-t}}{t} \, dt \leq \log \left( z_0^{-1} \right) + e^{-1}.
    \end{equation*}
    This leads to
    \begin{equation*}
        |W(z_0;1)| \geq 1 - z_0e^{z_0} 
        \left( \log \left( z_0^{-1} \right) + e^{-1} \right).
    \end{equation*}
    The right-hand side treated as a real function of $z_0$ has a negative derivative on the relevant interval $(0, 1/20]$, so we conclude that
    \begin{equation*}
        |W(z_0;1)| \geq 1 - \frac{1}{20} e^{1/20} \left( \log(20) + e^{-1} \right) > 0.8.
    \end{equation*}
    Together with (\ref{lemma_upper_bound_W}), this concludes the proof.

\end{proof}

\section{Numerical calculations}\label{section_numerics}

In this section we apply Theorems \ref{PM_theorem} and \ref{theorem_laplace_form}, and numerically calculate the integrals $I_\varepsilon(F)$ and $J_\varepsilon (F)$ in the forms given by (\ref{I(F)_Laplace_form}--\ref{J(F)_Laplace_form}) to get explicit bounds for $H_m$ for $m = 2, 3, 4, 5$ (and possibly beyond). The process is described below.

\begin{itemize}
    \item
        The process has two parts: optimization and certification. In the first step, we find a candidate for $Q$. Next, we rigorously compute the relevant quantities for the chosen candidate.
    \item 
        We normalize the powers of $W(s;a)$ by the corresponding powers of $W(\sigma;a)$ to control their magnitudes as $k \rightarrow \infty$, just like in (\ref{norm_I(F)}--\ref{norm_J(F)}). 
    \item
        We choose $a = \alpha k \log k$ for some $\alpha > 0$. Usually, we take $0.97 < \alpha < 1.00$. We also fix $\sigma = \alpha \log k$ and rescale $s = \sigma( 1 + i \tau )$ for $\tau \in \mathbf{R}$. The parameters $k$, $a$, $\varepsilon$, and $\alpha$ are all fixed.
    \item 
        To get a good candidate $Q$ we optimize over polynomials $Q$ satisfying $\operatorname{deg}(Q) \leq 8$, i.e.:
        \begin{equation*}
            Q(X)=\sum_{i=0}^{8} c_i X^i, 
            \quad \mathbf{c}=(c_0,\ldots,c_8)^T
        \end{equation*}
        The coefficients $c_i$ are free parameters, which leads to a generalized Rayleigh quotient problem of the form:
        \begin{equation*}
            \lambda_{\max} =\max_{\mathbf c\ne0} 
            \frac{\mathbf c^TM_J\mathbf c}{\mathbf c^TM_I\mathbf c},
        \end{equation*}
        where the $(i,j)$-th cell of $M_I$ is the value of $I_\varepsilon (F)$ integral but with the polynomial $Q(1 + \varepsilon - X)^2$ replaced with a monomial $(1 + \varepsilon - X)^{i+j}$. 
        
        For $J_\varepsilon (F)$ and $M_J$, it is a bit more complicated. To get the $(i,j)$-th cell, we need to replace the entire squared inner integral of $J_\varepsilon (F)$ with

        \begin{equation*}
            \left(\int \limits_0^{t+2\varepsilon} 
            \frac{(t + 2 \varepsilon - y)^i}{1 + ay} \, dy \right) \left(\int \limits_0^{t + 2 \varepsilon} 
            \frac{(t + 2\varepsilon - y)^j}{1 + ay} \, dy \right)
        \end{equation*} 
        and multiply it by $k$ to account for having $k$ functionals in the original problem of maximizing the quotient from Definition \ref{variational_quotient}.
        
        This way, we end up with a problem of calculating the cells of two $9 \times 9$ matrices, and then we solve the eigenvalue problem:
        \begin{equation*}
            M_J \mathbf{c} = \lambda M_I \mathbf{c}.
        \end{equation*}
    \item 
        As Fig. \ref{integral_plot} suggests, the contribution of the involved integrals is concentrated near the origin. To approximate the matrices $M_I$ and $M_J$ we set a parameter $T$, and we apply the Gauss-Legendre quadrature on the interval $\tau \in [-T,T]$. We usually set $T$ at $40$. 
    \item 
        Once we select a candidate $Q$, we calculate the integrals $I_\varepsilon(F)$ and $J_\varepsilon(F)$ directly. Over the central interval $[-T,T]$, we use the Arb library (Python), which implements the Gauss–Legendre quadrature with explicit error bounds.
    \item
        The middle part $|\tau| \in [T,4k]$ describes the region where it is too early for the tail bound from Lemma \ref{properties_of_W} to be effective. Instead, we apply Lemma \ref{monotonicity_lemma} with the value of the quotient $|W(s;a)/W(\sigma;a)|$ being evaluated at $\tau= \pm T$. 

        For this step, the functions $V_{Q,\varepsilon}(s;a)$, $K_{Q,\varepsilon}(s)$ are bounded as
        \begin{equation}\label{Laplace_real_bound}
            \left| V_{Q,\varepsilon}(s;a) \right| 
            \leq V_{Q,\varepsilon}(\sigma ;a), \quad
            \left| K_{Q,\varepsilon}(s) \right| \leq 
            K_{Q,\varepsilon}(\sigma),
        \end{equation}
        since the first one can be represented as the Laplace transform of a positive function and the other one has the integral form (\ref{tricky_integral_formula}). In both cases, the phase can be easily discarded by the triangle inequality.
    \item 
        We apply Lemma \ref{properties_of_W} and (\ref{Laplace_real_bound}) to bound the tails $|\tau| > 4k$.
    
\end{itemize}

The following results imply new concrete bounds for $H_2$, $H_3$, $H_4$, and $H_5$. The chosen polynomials can be found in the Jupyter notebook attached to the paper.

\begin{table}[H]
    \centering
    {\sffamily\small
    \setlength{\tabcolsep}{14pt}
        \begin{tabular}{cccc}
            \toprule
            $k$ & $\alpha$ & $\varepsilon$ & 
            Lower bound on $M_{k,\varepsilon}$ \\
            \midrule
              \MTwoBound   & 0.984   & 0.004    &  8.00017902  \\
              \MThreeBound & 0.980   & 0.0001   & 12.00000472  \\
              \MFourBound  & 0.985   & 0.00001  & 16.00000823  \\
              \MFiveBound  & 0.985   & 0.00001  & 20.00002452  \\
            \bottomrule
        \end{tabular}
    }
    \caption{Selected lower bounds for $M_{k,\varepsilon}$.}
    % \label{tab:Mk}
\end{table}

These results prove Theorem \ref{DHL_claims}. To get concrete upper bounds on $H_m$ for $m=2,3,4,5$, we require reasonable upper bounds on $H(k)$, the minimum diameter of an admissible $k$-tuple. We apply the shifted greedy sieve for $m=2,3$, the shifted Schinzel sieve for $m=4$, and the simple \textit{$k$ primes past $k$} construction for $m=5$ (see \cite{PM8b, Schinzel} for descriptions of these techniques). The results are:

% For $m=2,3$ we apply the shifted greedy sieve, and for $m=4$, the shifted Schinzel sieve \cite{PM8b, Schinzel}. The results are:

\begin{equation}\label{from_Mk_to_Hm}
    \begin{aligned}
        H_2 &\leq H(\MTwoBound)   \leq \HTwoBound, \\
        H_3 &\leq H(\MThreeBound) \leq \HThreeBound, \\
        H_4 &\leq H(\MFourBound)  \leq \HFourBound, \\
        H_5 &\leq H(\MFiveBound)  \leq \HFiveBound.
    \end{aligned}
\end{equation}
The bounds on $H(k)$ used here can certainly be improved by deeper calculations.

Below we demonstrate the comparison between the results obtained by this method and the upper bound predicted and proved in \cite{PM8b}:

\begin{figure}[h]
    \centering
    \includegraphics[width=15cm]{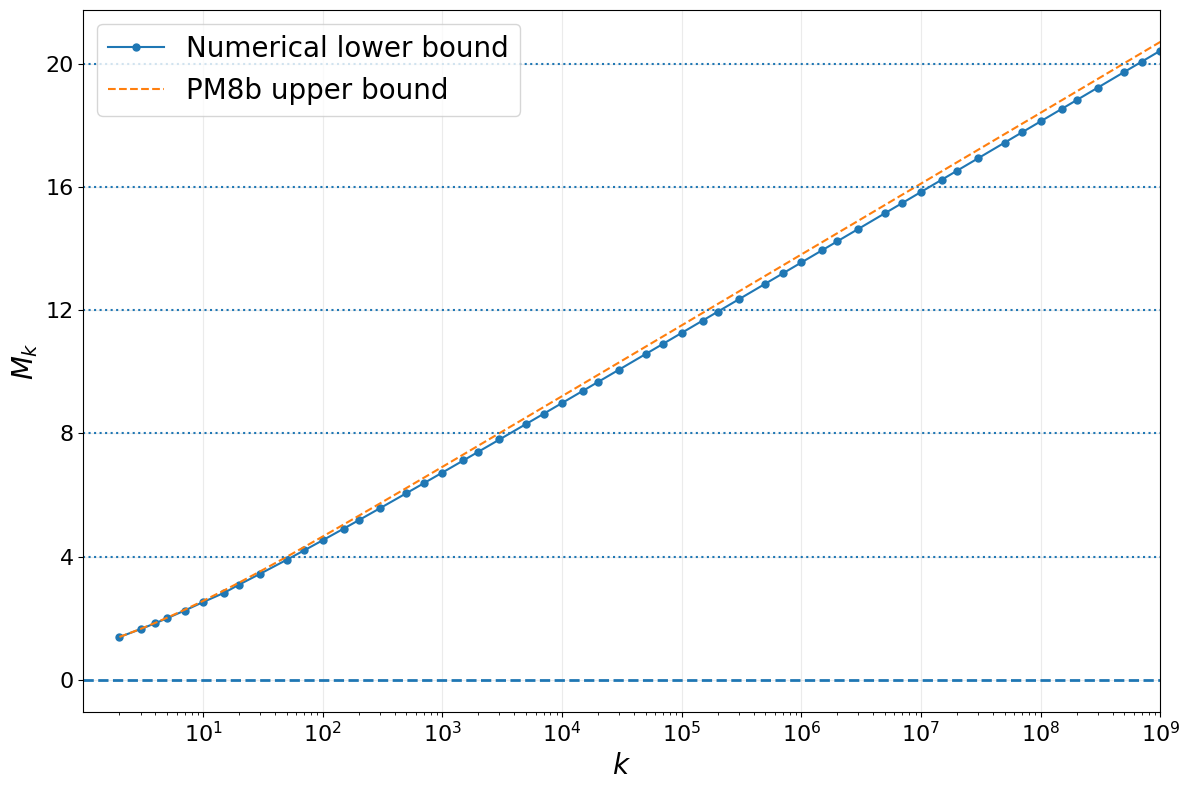}
    \caption{The lower bounds for $M_k$ obtained numerically for various $k$. In all these cases we take $\operatorname{deg}(Q)=0$, $\varepsilon = 0$, $a = k \log k$. For $k < 20$ we certify the results differently, but since this does not lead to any novel result, we skip the details.}
    \label{numerical_calculations_fig}
\end{figure}

To get the results from Fig. \ref{numerical_calculations_fig} we calculated the integrals $I_\varepsilon (F)$ and $J_\varepsilon (F)$ for
\begin{equation*}
    k \in \{ 2, 3, 4, 5, 7, 10 \} \cup \bigcup_{\ell=1}^8 
    \{ 1.5 \cdot 10^\ell, 2 \cdot 10^\ell, 
    3 \cdot 10^\ell , 5 \cdot 10^\ell, 7 \cdot 10^\ell, 
    10^{\ell + 1} \}.
\end{equation*}
The distance between the obtained $M_k$ and the upper bound seems to stall around $0.30$. This is the major clue leading to Theorem \ref{asymptotyczny_teoremat}.

% k^2 L^2 V(\sigma;a)

\section{Asymptotic behaviour}\label{section_asymptotic}

% \begin{thm}
%     State the Main Theorem (probably goes to the intro).
%     \[  M_k \ge 
%     \frac{k \sigma V(\sigma ; a) }{ W(\sigma ; a) } =
%     \left( \log k  - 2\gamma + \frac{\gamma^2 + \pi^2 / 6 - 2/k}{\log k} \right) \frac{Z_\infty-\mathcal E_J(k)} {Z_\infty+\mathcal E_I(k)}. \]
% \end{thm}

In this section we prove Theorem \ref{asymptotyczny_teoremat}. To this end, we apply Theorem \ref{theorem_laplace_form} with 

\begin{equation}\label{asymptotic_params_setting}
    Q = 1, \quad \varepsilon = 0, \quad
    \sigma= \log k, \quad a = k \log k,
\end{equation}
which is the same setting as in Subsection 'Convergence as $k \rightarrow \infty$' of Section \ref{section_laplace}. Since $Q=1$, we refer to $F$ as $F_{\text{base}}$. In this section we also put
\begin{equation*}
    L := \log k
\end{equation*}
for the sake of brevity. The main part of the proof is calculating the main terms for $I(F_{\text{base}})$ and $J(F_{\text{base}})$. The error terms can be grouped together and handled at once. 

We define the following replacements for $I$ and $J$ to avoid carrying the normalizing constants all the time:
\begin{equation}\label{defi_ZI_ZJ}
    \begin{aligned}
        Z_{I,k} &:= 
        \frac{1}{2 \pi}
        \int \limits_{-\infty}^{\infty}
        \frac{e^{i \sigma \tau} - e^{-\sigma}}{1 + i \tau}  
        \left( \frac{W (\sigma(1 + i \tau); a)}{W (\sigma; a)} \right)^k 
        d \tau, \\
         Z_{J,k} &:= \frac{\sigma^3 k^2}{2\pi }
         \int \limits_{-\infty}^{\infty} 
         e^{i \sigma \tau} 
         \left( \frac{W (\sigma(1 + i \tau); a)}{W (\sigma; a)} \right)^{k-1}
          V(\sigma(1 + i \tau); a) \, 
          d \tau.
    \end{aligned}
\end{equation}
It follows directly from (\ref{norm_I(F)}) and (\ref{norm_J(F)}) that we have

\begin{equation}\label{from_IJ_to_Z}
    \begin{aligned}
         I(F_{\mathrm{base}}) = 
         e^{\sigma} W(\sigma; a)^k \, Z_{I,k}, \quad
        J(F_{\mathrm{base}}) = 
        \frac{e^{\sigma} W(\sigma; a)^{k-1} }{\sigma^2 k^2} \, Z_{J,k}.
    \end{aligned}
\end{equation}

\begin{lemma}\label{Z_Lemma!}
    Let $k$ be a sufficiently large integer. Recall the notation of Theorem \ref{theorem_laplace_form} and the set of parameters given by (\ref{asymptotic_params_setting}). We have

    \begin{equation*}
        \begin{aligned}
            Z_{I,k} &= Z_\infty + O(L^3 / k), \\
            Z_{J,k} &= Z_\infty 
            \left( \left( L^2 - 
            2 \left( \gamma + \frac{C_1}{C_0} \right) L
            + \left( \gamma^2 + \frac{\pi^2}{6} + 2\gamma \frac{C_1}{C_0} + \frac{C_2}{C_0} \right) \right)
            \right) + \, O( L^4 / k),
        \end{aligned}
    \end{equation*}
    where the constants $C_m$ are defined in Theorem \ref{asymptotyczny_teoremat} and $Z_\infty := C_0$.
\end{lemma}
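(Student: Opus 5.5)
The plan is to evaluate both $Z_{I,k}$ and $Z_{J,k}$ by a saddle-free local expansion around $\tau = 0$ in the rescaled variable $z := s/a = (1+i\tau)/k$, which is small because $a = kL$ and $\sigma = L$.

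\textbf{Step 1: local expansion of $W$.} By the scaling identity (\ref{scaling_W}), $W(s;a)/W(\sigma;a) = W(z;1)/W(1/k;1)$. The explicit form (\ref{explicit_form_W}) together with $E_1(z) = -\gamma - \log z + O(|z|)$ gives
\begin{equation*}
W(z;1) = 1 + z(\log z + \gamma) + O(|z|^2 \log^2 (|z|^{-1}+2)).
\end{equation*}
Taking logarithms and multiplying by $k$ should give, uniformly for $|\tau| \le cL$,
\begin{equation*}
\left(\frac{W(s;a)}{W(\sigma;a)}\right)^{k} = \exp\bigl( (1+i\tau)(\log(1+i\tau) - L + \gamma) + L - \gamma \bigr)\bigl(1 + O((1+\tau^2)L^2/k)\bigr).
\end{equation*}
Multiplying by $e^{i\sigma\tau} = e^{iL\tau}$, the terms in $L$ cancel exactly. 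We are left with $e^{i\gamma\tau}(1+i\tau)^{1+i\tau}$, whose modulus decays like $e^{-\pi|\tau|/2}$. Dividing by $1+i\tau$ then produces the integrand of $C_0$, so $Z_{I,k} = C_0 + (\text{errors})$. The $e^{-\sigma} = 1/k$ term contributes only $O(1/k)$ times an integral of the same type. Replacing the exponent $k$ by $k-1$ costs a factor $1 + O(L/k)$ on this range.

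\textbf{Step 2: the function $V$.} With $Q = 1$ and $\varepsilon = 0$ we have $v(y) = \log(1+ay)/a$, so $V(s;a) = a^{-2}\int_0^\infty \log^2(1+at)\, e^{-st}\,dt$. Replacing $\log(1+at)$ by $\log(at)$ costs only lower-order terms, of relative size $O(L/k)$ on $|\tau| \le cL$. The classical Laplace transform
\begin{equation*}
\int_0^\infty \log^2(at)\, e^{-st}\,dt = \frac{1}{s}\left( \left(\log\frac{a}{s} - \gamma\right)^2 + \frac{\pi^2}{6} \right)
\end{equation*}
applies with $\log(a/s) = L - \log(1+i\tau)$. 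This gives
\begin{equation*}
\sigma^3 k^2 V(s;a) = \frac{1}{1+i\tau}\Bigl( (L - \gamma - \log(1+i\tau))^2 + \frac{\pi^2}{6} \Bigr)\bigl(1 + O(L/k)\bigr).
\end{equation*}
Inserting this into $Z_{J,k}$ alongside Step 1 and expanding the square yields
\begin{equation*}
C_0 L^2 - 2L(\gamma C_0 + C_1) + (\gamma^2 + \pi^2/6)C_0 + 2\gamma C_1 + C_2,
\end{equation*}
which is exactly the claimed main term. Absolute convergence of the $C_m$ follows from the $e^{-\pi|\tau|/2}$ decay. Positivity of $C_0$ should follow from $C_0 = \lim Z_{I,k}$ with $Z_{I,k} = I(F_{\mathrm{base}})/(e^\sigma W(\sigma;a)^k) > 0$, once the error analysis is done.

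\textbf{Step 3: errors and tails (main obstacle).} On $|\tau| \le cL$, the relative errors $O(\tau^2 L^2/k)$ and $O(L/k)$ are integrated against $e^{-\pi|\tau|/2}$ times at most $L^2$ (for $Z_J$). This gives $O(L^3/k)$ for $Z_I$ and $O(L^4/k)$ for $Z_J$. Extending the $C_m$ integrals to all of $\mathbf{R}$ costs $e^{-\pi c L/2} = k^{-\pi c/2}$, which is negligible for $c$ large.

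The hard part is the range $|\tau| > cL$, where the Taylor expansion is unusable. There I will use Lemma \ref{monotonicity_lemma}: for $cL \le |\tau| \le k/20$, the ratio $|W(s;a)/W(\sigma;a)|$ is at most its value at $\tau = cL$. By Step 1 that value is $\exp(-\tfrac{\pi c L}{2k}(1+o(1)))$, hence its $(k-1)$-st power is at most $k^{-\pi c/2 + o(1)}$. The crude bounds $|V(s;a)| \le V(\sigma;a) \ll L^2/(k^2L^3)$ and $|(e^{i\sigma\tau} - e^{-\sigma})/(1+i\tau)| \le 2$ then bound this piece by $k \cdot k^{-\pi c/2}$, which is acceptable for $c$ large.

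For $|\tau| > k/20$, i.e.\ $|z| \ge 1/20$, the tail bound (\ref{tail_bound_W}) gives $|W(z;1)/W(1/k;1)| \le 5k/(2|1+i\tau|)$. This is not below $1$ until $|\tau| \gg k$, so I would also use monotonicity up to $|\tau| = 4k$. Beyond that point the ratio is at most $5/8$, so its $(k-1)$-st power is integrable with total contribution $O((5/8)^{k})$. Checking that these pieces mesh uniformly, in particular that the Step 1 expansion remains valid up to $|\tau| = cL$ with explicit constants, is the step I expect to require the most care.
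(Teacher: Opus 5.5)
Your proposal is correct and follows essentially the same route as the paper: the same expansion of $W$ in $z=(1+i\tau)/k$ via $E_1$ giving the limiting integrand $e^{i\gamma\tau}(1+i\tau)^{i\tau}$, the same replacement $\log(1+ku)\to L+\log u$ evaluated through the Laplace transform of $\log^2$ (the paper's $\Gamma$-derivative computation), and the same three-range split ($|\tau|\le cL$, where the paper takes $c=100$; Lemma \ref{monotonicity_lemma} up to $|\tau|=4k$; the bound (\ref{tail_bound_W}) beyond). One small aside: your limit argument only yields $C_0\ge 0$, not $C_0>0$, but positivity is not part of this lemma (the paper gets it from the numerical value $C_0=0.216683\dots$).
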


\begin{proof}

We apply the following decomposition:

\begin{equation*}
    \begin{aligned}
        Z_{I,k} &=  
        \mathrm{MT}_I + 
        \mathcal{E}_{I, \mathrm{middle}} + 
        \mathcal{E}_{I, \mathrm{tail}}, \\
        Z_{J,k} &=  
        \mathrm{MT}_J + 
        \mathcal{E}_{J, \mathrm{middle}} + 
        \mathcal{E}_{J, \mathrm{tail}},
    \end{aligned}
\end{equation*}
where for $T = 100 \, L$ we split $Z_{I,k}$ as

\begin{equation*}
    \mathrm{MT}_I := \int \limits_{|\tau| \leq T},  
    \quad \quad
    \mathcal{E}_{I, \mathrm{middle}} := \int \limits_{T \leq |\tau| \leq 4k}, 
    \quad \quad
    \mathcal{E}_{I, \mathrm{tail}} := \int \limits_{|\tau| \geq 4k}.
\end{equation*}
For $J(F_{\mathrm{base}})$ we consider an analogous decomposition.

\subsubsection*{Main term of $Z_{I,k}$}

We introduce $z := (1 + i \tau)/k$ for some $\tau$ satisfying $|\tau| \leq 100 \, L$. We also define $z_0 := \operatorname{Re}(z) = 1/k$. Recall the explicit form of $W$ from Lemma \ref{properties_of_W}. We apply the expansion

\begin{equation*}
    E_1(z) = -\gamma - \log z - \sum_{n=1}^\infty \frac{(-z)^n}{n \cdot n!}
\end{equation*}
valid on the $\operatorname{Re}(z)>0$ half-plane. For the rest of the argument, $\log z$ always denotes the principal branch of the complex logarithm. Extracting the leading terms, we reach the form:

\begin{equation*}
    W(z; 1) = 1 + z ( \gamma + \log z) + O \left( |z|^2 ( 1 + |\log z|) \right)
\end{equation*}
in some open neighborhood of $z=0$. From the Taylor approximation we obtain

\begin{equation}\label{logW(z;1)}
    \begin{aligned}
        \log( W(z;1)) &= W(z;1) - 1 + O \left( |z|^2 ( 1 + |\log z|)^2 \right) \\
        &= z ( \gamma + \log z) + O \left( |z|^2 ( 1 + |\log z|)^2 \right).
    \end{aligned}
\end{equation}
From (\ref{logW(z;1)}) considered for both $z$ and $z_0$ we have

\begin{equation}\label{log_W_summary}
    k \log \left( \frac{W(z;1)}{W(z_0;1)} \right) = 
    i \tau (\gamma - L) + (1 + i \tau) \log ( 1 + i \tau) + \Delta_k (\tau),
\end{equation}
where 

\begin{equation}\label{Delta_k_bound}
    \left| \Delta_{k} ( \tau ) \right| \ll \frac{L^2 (1+\tau^2)}{k}. 
\end{equation}

Taking the exponentials of (\ref{log_W_summary}), multiplying by $e^{i \sigma \tau}/(1 + i \tau)$, recalling that $a:= k L$, and applying Lemma \ref{properties_of_W} we get the following simplification of the  integrand of $Z_{I,k}$: 

\begin{equation}\label{Z_I_resolution}
    \begin{aligned}
        \frac{e^{i \sigma \tau}}{(1 + i \tau)} \left( \frac{W(\sigma (1 + i \tau); a)}{W(\sigma ; a)} \right)^k &=
        e^{i \gamma \tau} (1+ i \tau)^{i \tau} e^{\Delta_k (\tau)}, 
        \\
        \frac{e^{-\sigma}}{(1 + i \tau)} \left( \frac{W(\sigma (1 + i \tau); a)}{W(\sigma ; a)} \right)^k &\ll \frac{1}{k}.
    \end{aligned}
\end{equation}
The main term of (\ref{Z_I_resolution}) can be bounded as

\begin{equation}\label{bounding_limit_arctan}
    \left| e^{i \gamma \tau } (1+ i \tau)^{i \tau} \right|
    \leq e^{- |\tau| \arctan |\tau |},
\end{equation}
where we used the identity:

\begin{equation*}
    (1 + i \tau) = \sqrt{1 + \tau^2} \, e^{i \arctan \tau}.
\end{equation*}
Consequently, we get the following crude bound:

\begin{equation}\label{resolvent_crude_bound}
     \int \limits_0^T
     \left| e^{i \gamma \tau} (1+ i \tau)^{i \tau} \right|
     \left| e^{\Delta_k (\tau)} - 1 \right|  d \tau
     \ll  \frac{L^2}{k} .
\end{equation}
In particular, the contribution of the $\Delta_k (\tau)$ factor of (\ref{Z_I_resolution}) to the integral $Z_{I,k}$ can be bounded as

\begin{equation}\label{Delta_res}
    \frac{1}{\pi} \int \limits_0^T e^{-\tau \arctan \tau} 
    \left| e^{\Delta_k ( \tau ) } - 1\right| d \tau
    \ll 
    \frac{L^2}{k} \int \limits_0^T (1+\tau^2) \, 
    e^{-\tau \arctan \tau} d \tau \ll \frac{L^2}{k}.
\end{equation}

It now suffices to bound the tails of $Z_\infty$. We have the following simple inequality 

\begin{equation}\label{arctan_bound}
    - \arctan \tau \leq \frac{1}{\tau} - \frac{\pi}{2} \quad \text{for $\tau > 0$}.
\end{equation}
This gives

\begin{equation}\label{final_arctan_bound}
\frac{1}{\pi} 
\int \limits_T^{\infty} e^{- \tau \arctan \tau} \, d \tau 
\ll \int \limits_T^{\infty} e^{ - \pi \tau / 2} \, d \tau \ll
\exp \left( - \frac{\pi T}{2} \right) \ll k^{-150}.
\end{equation}

\subsubsection*{Main term of $Z_{J,k}$}

Recall the definition of $V(s;a)$ and $s = \sigma (1 + i \tau)$. Substitute $\sigma t \mapsto u$ within the integral. This time we work with $z:= (1 + i \tau)$. We get

\begin{equation}\label{log(1+ku)}
    \sigma^3 k^2 V (\sigma z; \sigma k) \, = \,
    \int \limits_0^{\infty} e^{-zu} \, \log^2  ( 1 + ku) \, du 
    \, = \, \operatorname{MT}_V(z) + R_V(z),
    % \int \limits_0^{\infty} e^{-zu} \, \log^2  ( 1 + ku) \, du 
    % = \int \limits_0^{\infty} e^{-zu} \, (L + \log u)^2 \, du 
    % \, + \, R_V (z),
\end{equation}
where

\begin{equation*}
    \begin{aligned}
        \operatorname{MT}_V(z) &:= \int \limits_0^{\infty} e^{-zu} 
        \, (L + \log u)^2 \, du, \\
        R_V (z) &:= \int \limits_0^{\infty} e^{-zu} 
        \left( \log^2  ( 1 + ku) - \log^2  (ku) \right) du.
    \end{aligned}
\end{equation*}

Let us deal with the main term of (\ref{log(1+ku)}) first. We have

\begin{equation*}
    \Gamma (w) := \int \limits_0^\infty e^{-v} v^{w-1} \, dv 
    \quad \Longrightarrow \quad \frac{\Gamma (w)}{z^w} = 
    \int \limits_0^\infty e^{-zu} u^{w-1} \, du,
\end{equation*}
which implies

\begin{equation}\label{from_MT_to_Gamma}
    \operatorname{MT}_V(z) =
    \left. \left( L + \frac{\partial}{\partial w} \right)^2 
    \frac{\Gamma (w)}{z^w} \right|_{w=1} = \,
    \frac{1}{z} \left( \left( L - \gamma - \log z \right)^2 + \frac{\pi^2}{6} \right),
\end{equation}
where to get the last equation, we simply expand everything and recall that 
\begin{equation*}
    \Gamma (1) = 1, \quad
    \Gamma' (1) = -\gamma, \quad 
    \Gamma'' (1) = \gamma^2 + \frac{\pi^2}{6}.
\end{equation*}

To deal with the error term $R_V (z)$, we substitute $ku \mapsto v$, which gives

\begin{equation}\label{RVz_upper_bound}
    \begin{aligned} |R_V(z)| &\le \frac1k\int_0^\infty e^{-v/k} \left|\log^2(1+v)-\log^2v\right|  dv \\ &\ll \frac{1+L^2}{k} \ll \frac{L^2}{k}. \end{aligned}
\end{equation} 

From (\ref{from_MT_to_Gamma}--\ref{RVz_upper_bound}) we get

\begin{equation}\label{resolving_Z_J,k_1st}
    \operatorname{MT}_J = \frac{1}{2 \pi} \int \limits_{-T}^{T}
    \frac{e^{i\sigma \tau}}{1 + i \tau} 
    \left( \frac{W(\sigma(1 + i \tau);a)}{W(\sigma; a)} \right)^{k-1}
    \left( (L - \gamma - \log z)^2 + \frac{\pi^2}{6} \right) d \tau
    + O \left( \frac{L^4}{k} \right).
\end{equation}
We apply (\ref{Z_I_resolution}), (\ref{resolvent_crude_bound}) to (\ref{resolving_Z_J,k_1st}) and conclude that

\begin{equation}\label{Z_jk_constants_form}
    Z_{J,k} = \frac{1}{2 \pi} \int \limits_{-T}^{T}
    e^{i \gamma \tau} (1+ i \tau)^{i \tau}
    \left( (L - \gamma - \log z)^2 + \frac{\pi^2}{6} \right) d \tau 
    \, + O \left( \frac{L^4}{k}  \right).
\end{equation}
The integral on the right-hand side of the equation above is absolutely convergent as $T \rightarrow \infty$, and its tail vanishes rapidly due to (\ref{bounding_limit_arctan}) and (\ref{final_arctan_bound}). In the statement of Theorem \ref{asymptotyczny_teoremat} we defined constants

\begin{equation*}
    C_m := \frac{1}{2 \pi} \int \limits_{-\infty}^{\infty}
    e^{i \gamma \tau} (1 + i \tau)^{i \tau} \log^m (1 + i \tau)
    \, d \tau.
\end{equation*}
In particular, $Z_\infty = C_0$. Also, the exponential decay (\ref{bounding_limit_arctan}) allows us to numerically calculate:

\begin{equation}
    C_0 = 0.216683\dots
\end{equation}
From (\ref{Z_jk_constants_form}), we calculate

\begin{equation}
    \begin{aligned}
        Z_{J,k} &=  C_0 \left((L - \gamma)^2 + \frac{\pi^2}{6} \right)
        - 2(L - \gamma) C_1 + C_2
        \, + \, O \left( \frac{L^4}{k}  \right) \\
        &= Z_\infty \left( L^2 - 
        2 \left( \gamma + \frac{C_1}{C_0} \right) L
        + \left( \gamma^2 + \frac{\pi^2}{6} + 2\gamma \frac{C_1}{C_0} + \frac{C_2}{C_0} \right) \right)
        \, + \, O \left( \frac{L^4}{k}  \right).
    \end{aligned}
\end{equation}

\subsubsection*{Error terms}

For this part, we need the inequalities
\begin{equation}\label{majorize_W_and_V}
    \left| \frac{W(s;a)}{W(\sigma ;a)} \right| \leq 1 , \quad
    \left| \frac{V(s;a)}{V(\sigma ;a)} \right| \leq 1,
\end{equation}
which follow directly from (\ref{definition_W_V}) and the triangle inequality. We also need an inequality dealing with the normalizing term used in (\ref{defi_ZI_ZJ}):
\begin{equation}\label{ineq_renormalize_J}
    \sigma^3 k^2 V(\sigma ; a) = \frac{1}{k} 
    V \left( \frac{1}{k} ; 1 \right) =
    \int \limits_0^\infty \log^2 (1 + ku) \, e^{-u} \, du \ll L^2.
\end{equation}
Now, we deal with the middle terms by an application of Lemma \ref{monotonicity_lemma} together with (\ref{Z_I_resolution}--\ref{bounding_limit_arctan}) and (\ref{majorize_W_and_V}--\ref{ineq_renormalize_J}):

\begin{equation}\label{middle_tail_est}
    \begin{aligned}
        \max \left\{ 
            |\mathcal{E}_{I, \mathrm{middle}}|, 
            |\mathcal{E}_{J, \mathrm{middle}}| 
        \right\} &\ll 
        L^2 
        \int \limits_{T}^{4k} 
        \left| \frac{W(\sigma (1 + i \tau);a)}{W(\sigma ;a)} \right|^{k-1} 
        \, d \tau  \\
        &\leq
        (4k-T)L^2 \left| \frac{W((1+iT)/k;1)}{W(1/k;1)} \right|^{k-1} \\
        &\ll k  L^3 \exp\left[ -\frac{k-1}{k} (T \arctan T)  
        + O \left( \frac{\log^4 k}{k} \right) \right] \\
        &\ll kL^3 \exp \left( - \pi T/2 \right) \\
        &\ll k^{-150}.
    \end{aligned}
\end{equation}

From Lemma \ref{properties_of_W} we get the following tail estimate:

\begin{equation*}
    \left| \frac{W(s;a)}{W(\sigma;a)} \right| \leq \frac{5k}{2|\tau|}.
\end{equation*}
This gives
\begin{equation}\label{far_tail_est}
    \max \left\{ |\mathcal{E}_{I, \mathrm{tail}}|, |\mathcal{E}_{J, \mathrm{tail}}| \right\} \ll 
    L^2 \left( \frac{5k}{2} \right)^{k-1}
    \int \limits_{4k}^{\infty} \frac{d \tau}{\tau^{k-1} } 
    \ll L^2 \left( \frac{5}{8} \right)^{k-1}.
\end{equation}

\end{proof}

\subsection*{Proof of Theorem \ref{asymptotyczny_teoremat}}

The proof of the main part follows directly from Lemmas \ref{properties_of_W} and \ref{Z_Lemma!}, because
\begin{equation*}
    M_k \geq \frac{k J(F_{\text{base}})}{I(F_{\text{base}})} =
    \frac{  Z_{J,k} }{ 
    \sigma W( 1/k; 1) \, Z_{I,k} }.
\end{equation*}
Using $ W (1/k;1) = 1 + O(L/k)$, we conclude

\begin{equation*}
    \begin{aligned} 
        M_k\ge {}& L - 2\left( \gamma + 
        \frac{C_1}{C_0}\right) + \frac1L\left( \gamma^2+\frac{\pi^2}{6} + 2 \gamma\frac{C_1}{C_0} + \frac{C_2}{C_0} \right) + O\left(\frac{L^4}{k}\right).
    \end{aligned}
\end{equation*}

\subsubsection*{The numerical part}

To obtain the stronger asymptotic constant, we consider the function family (\ref{ansatz_1}) with a general polynomial $Q$ and use numerical calculations. Put $c := 0.273572$. We apply Theorem \ref{theorem_laplace_form} and get:

\begin{equation}\label{defi_gen_ZI_ZJ}
    \begin{aligned}
        I(F) &= 
        \frac{\sigma W(\sigma ; a)^k}{2 \pi }
        \int \limits_{-\infty}^{\infty}
        \left( 
            \int \limits_0^1 \widetilde{Q}(\sigma(1 - X))^2 
            e^{\sigma (1 + i \tau) X} dX
        \right) 
        \left( \frac{W (\sigma(1 + i \tau); a)}{W (\sigma; a)} \right)^k 
        d \tau, \\
         J(F) &= \frac{\sigma e^\sigma W(\sigma ; a)^{k-1}}{2\pi }
         \int \limits_{-\infty}^{\infty} 
         e^{i \sigma \tau} 
         \left( \frac{W (\sigma(1 + i \tau); a)}{W (\sigma; a)} \right)^{k-1}
          V_Q (\sigma(1 + i \tau); a) \, 
          d \tau.
    \end{aligned}
\end{equation}
For technical reasons, it is better to work with the rescaled polynomial $\widetilde{Q}(\sigma x) := Q(x)$ defined for $x \geq 0$. Note that dependence on $k$ may sneak in; we keep $\widetilde{Q}$ fixed. We need to turn the factors dependent on $\widetilde{Q}$ into $k$-free expressions (up to possible small error terms). We rewrite

\begin{equation}\label{numer_kfree}
    \begin{aligned}
        \int \limits_0^1 \widetilde{Q}(\sigma(1 - X))^2 
        e^{\sigma (1 + i \tau) X} dX &= 
        \frac{e^{\sigma(1 + i \tau)}}{\sigma}
        \int \limits_0^L \widetilde{Q} (x)^2 \,
        e^{- (1 + i \tau)x} \, dx, \\
        V_Q (\sigma(1 + i \tau); a) &=
        \frac{1}{\sigma^3 k^2}
        \int \limits_0^\infty \left( 
        \int \limits_0^x \frac{\widetilde{Q}(x-u)}{u + \frac{1}{k} } du \right)^2
        e^{-(1+i \tau) x} \, dx.
    \end{aligned}
\end{equation}
Using the Taylor expansion, we resolve

\begin{equation}\label{numer_taylor}
    \begin{aligned}
        \int \limits_0^x \frac{\widetilde{Q}(x-u)}{u + \frac{1}{k} } du &=
        \sum_{j=0}^{d} \frac{(-1)^j \, \widetilde{Q}^{(j)} (x)}{j!}
        \int \limits_0^x \frac{u^j}{u + \frac{1}{k}} \, du \\
        &= \widetilde{Q} (x) ( \log x + L ) - (\mathcal{H}\widetilde{Q})(x) + o_{k \rightarrow \infty}(1),  
    \end{aligned}
\end{equation}
where $d := \operatorname{deg} (\widetilde{Q})$ and

\begin{equation*}
    (\mathcal{H} \widetilde{Q} )(x) := 
    \sum_{j=1}^{d} \frac{(-1)^{j+1} \, \widetilde{Q}^{(j)} (x) \, x^j}{j \cdot j!}.
\end{equation*}
Combining (\ref{defi_gen_ZI_ZJ}--\ref{numer_taylor}), we obtain

\begin{equation}\label{extracting_constant}
    k J(F) - (\log k) I(F) = 
    \frac{e^\sigma W(\sigma ; a)^{k-1}}{\pi k \sigma }
    \int \limits_{-\infty}^{\infty} 
    e^{i \sigma \tau} 
    \left( \frac{W (\sigma(1 + i \tau); a)}{W (\sigma; a)} \right)^{k-1}  \left( N ( \tau ) + O(L^{-1})\right) d \tau,  
\end{equation}
where we put

\begin{equation*}
    N(\tau ) :=
    \int \limits_0^\infty
    \widetilde{Q}(x) \left( \widetilde{Q}(x) \log x - 
    (\mathcal{H} \widetilde{Q}) (x) \right) e^{-(1 + i \tau)x} \, dx.
\end{equation*}
The error term $O(L^{-1})$ comes predominantly from squaring the integral from (\ref{numer_taylor}). We also need to account for the discrepancy between the leading terms via parts 1 and 2 of Lemma \ref{properties_of_W} and extending the upper integral from (\ref{numer_kfree}) to $\infty$, although these errors are much smaller.

By the triangle inequality, we have $|N(\tau)| \ll_{\widetilde{Q}} 1$. That means that we can safely apply the error estimates analogous to (\ref{middle_tail_est}--\ref{far_tail_est}) to the integral from (\ref{extracting_constant}). Therefore, from (\ref{Z_I_resolution}) it suffices to show that there exists a polynomial $\widetilde{Q}$ for which

% \begin{equation}
%      k J(F) - (\log k)  I(F) \geq (-c + o(1))  I(F).
% \end{equation}
% This can now be reformulated as 

\begin{equation}\label{final_intergral_num}
     \frac{ 2 \int_{-\infty}^{\infty} 
    e^{i \gamma \tau} (1+ i \tau)^{i \tau + 1}  N( \tau ) \, d \tau}{
    \int_{-\infty}^{\infty} e^{i \gamma \tau} (1 + i \tau)^{i \tau + 1} N_0 (\tau) \, d \tau}
    > -c + o(1),
\end{equation}
where we define $N_0 (\tau) := \int_0^{\infty} \widetilde{Q} (x)^2 \,
e^{- (1 + i \tau)x} \, dx.$ We numerically approximate the supremum of the quotient on the left-hand side of (\ref{final_intergral_num}) over polynomials of degree 20, and the best value we get is $-0.27357161702 \dots$. Once a good polynomial is selected, we freeze it and certify the result. An interested reader may find the details of these calculations in the notebooks attached to this work. 

As the table below shows, the gains appear to diminish as the polynomial degree increases. Also, the row for $\operatorname{deg} \widetilde{Q} = 0 $ serves as a nice sanity check to the first part of Theorem \ref{asymptotyczny_teoremat}.

\begin{table}[H]
    \centering
    {\sffamily\small
    \setlength{\tabcolsep}{18pt}
        \begin{tabular}{cc}
            \toprule
            Degree of $\widetilde{Q}$ & Best constants found \\
            \midrule
            0  & $-$0.336523 \\
            1  & $-$0.329521 \\
            2  & $-$0.284984 \\
            3  & $-$0.276507 \\
            4  & $-$0.275032 \\
            5  & $-$0.274489 \\
            10 & $-$0.273744 \\
            15 & $-$0.273615 \\
            20 & $-$0.273572 \\
            \bottomrule
        \end{tabular}
    }
    \caption{Numerical values of the best asymptotic constants obtained for polynomials of various degrees.}
    % \label{tab:eta_values}
\end{table}

% We define the following replacements for $I$ and $J$ to avoid carrying the normalizing constants all the time:

% It follows directly from (\ref{norm_I(F)}) and (\ref{norm_J(F)}) that we have

\section{Further remarks}

The results concerning bounds on $H_m$ and $M_k$ can be improved in at least a few ways. Some natural approaches are:

\begin{itemize}
    \item 
        We can consider a wider family of functions than (\ref{ansatz_1}) by introducing symmetric polynomials of higher degree. In principle, the form:
        \begin{equation*}
            F(t_1, \dots , t_k) = \frac{\Phi (y, P_2, P_3, \dots)}{\prod_{i=1}^k (1+ a t_i)},
        \end{equation*}
        where $\Phi$ is a polynomial, $y := 1 + \varepsilon - P_1$, and $P_r := \sum_{i=1}^k t_i^r$ should lead to stronger numerics.
    \item
        The error terms from the proof of Theorem \ref{asymptotyczny_teoremat} were bounded rather loosely. 
        It might be possible to extend this result to all $k$ by 
        providing estimates with explicit constants.
\end{itemize}

\clearpage

%%%%%%%%%%%%%%%%%%%%%%%%%%%%%%%%%%%%%%%%%%%%%%%%%%%%%%%%%%%%%%%%%%%%%%%
%%%%%%%%%%%%%%%%%%%%%%%%%%%%%%%%%%%%%%%%%%%%%%%%%%%%%%%%%%%%%%%%%%%%%%%

%\bibliography{references}

\end{document}